\newtheorem{thm}{Theorem}[section]
\newtheorem{cor}[thm]{Corollary}
\newtheorem{lem}[thm]{Lemma}
\newtheorem{prop}[thm]{Proposition}
\newtheorem*{lem*}{Lemma}
\theoremstyle{definition}
\newtheorem{defn}[thm]{Definition}
\newtheorem{eg}[thm]{Example} 
\newcommand{\flag}{Fl}
\newcommand{\cod}{\mathfrak{cod}}
\newcommand{\dom}{\mathfrak{dom}}
\newcommand{\gpd}[1]{\mathfrak{gpd}_{#1}}
\newcommand{\Unitarity}{Theorem 2.3, \cite{margolis1987inverse}}
\newcommand{\MacAllister}{MacAllister's Covering Theorem}
\newcommand{\Lawson}{Theorem 4.3, \cite{lawson1998inverse}}
\newcommand{\Schein}{Theorem 5, \cite{schein1996subsemigroups}}
\newcommand{\Isometries}{Proposition 2.1.4, \cite{paterson2012groupoids}}
\newcommand{\Semiorthogonal}{Proposition 2.1.4, \cite{paterson2012groupoids}}
\newcommand{\THMFiniteInverseReps}{Theorem 4.1, \cite{linckelmann2013inverse}}
\theoremstyle{plain}
\newtheorem*{thm:inverse-category-representations}{Theorem \ref{thm:inverse-category-representations}}
\newtheorem*{thm:inverse-extendability}{Theorem \ref{thm:inverse-extendability}}
\newtheorem*{cor:decomposability}{Corollary \ref{cor:decomposability}}
\newtheorem*{thm:inverse.structure}{Theorem \ref{thm:inverse.structure}}
\newtheorem*{thm:partial.isometries}{Theorem \ref{thm:partial.isometries}}
\newtheorem*{thm:inner.products}{Theorem \ref{thm:inner.products}}
\newtheorem*{thm:embed}{Theorem \ref{thm:embed}}
\newtheorem*{thm:decompose}{Theorem \ref{thm:decompose}}
\newtheorem*{thm:unitarity}{\Unitarity}
\newtheorem*{thm:macallister}{\MacAllister}
\newtheorem*{thm:lawson}{\Lawson}
\newtheorem*{thm:schein}{\Schein}
\newtheorem*{thm:isometries}{\Isometries}
\newtheorem*{thm:semiorthogonal}{\Semiorthogonal}
\newtheorem*{thm:metric}{Theorem \ref{thm:metric}}
\newtheorem*{thm:BS}{Theorem \ref{thm:BS}}
\newtheorem*{thm:mobius-inversion}{M\"{o}bius Inversion Theorem, \cite{rota1964foundations}}
\newtheorem*{thm:mobius-function}{Theorem 1.3, \cite{rota1964foundations}}
\newtheorem*{thm:finite.inverse.reps}{\THMFiniteInverseReps}
\newtheorem*{cor:covers}{Corollary \ref{cor:covers}}
\newtheorem*{prop:ehresmann-groupoids}{Proposition, \cite{dewolf2015ehresmann}}
\title{Invertibility in category representations}
\author{Sanjeevi Krishnan and Crichton Ogle}
\begin{document}
\begin{abstract}
  Inverse categories are categories in which every morphism $\zeta$ has a unique pseudo-inverse $\zeta^\dagger$ in the sense that $\zeta\zeta^\dagger\zeta=\zeta$ and $\zeta^\dagger\zeta\zeta^\dagger=\zeta^\dagger$.
  Persistence modules from topological data analysis and similarly decomposable category representations factor through inverse categories.    
  This paper gives a numerical condition, decidable when the indexing category is finite, characterizing when a representation of a small category factors through an inverse category.
\end{abstract}
\maketitle
\tableofcontents

\addtocontents{toc}{\protect\setcounter{tocdepth}{1}}

\section{Introduction}
Geometric data is often encoded in a diagram of vector spaces, a representation
\begin{equation}
  \label{eqn:diagrams}
  \nabla:\smallcat{1}\ra\VECTORSPACES_k
\end{equation}
of a small category $\smallcat{1}$ in the category $\VECTORSPACES_k$ of finite dimensional vector spaces over a field $k$.
Examples include \textit{$n$-dimensional persistence modules} in topological data analysis ($\smallcat{1}$ is a rank $n$ poset), quiver representations ($\smallcat{1}$ is a free category on a directed graph), cellular sheaves of vector spaces ($\smallcat{1}$ is a poset of cells in a cell complex) \cite{shepard1986cellular}, and parallel transport for a flat vector bundle ($\smallcat{1}$ is the fundamental groupoid of a manifold).
In order to better understand the data, we often wish to decompose the diagram $\nabla$ into a direct sum of simpler such diagrams.
All $1$-dimensional persistence modules and quiver representations over simply laced Dynkin diagrams decompose into what we call \textit{blockcodes}, isomorphisms on their supports, that are irreducible.    
These decompositions give a concise description of the data that is then amenable, say, to efficient storage and statistical analyses (eg. \cite{kwitt2015statistical}).
General representations of small categories, even those of interest in data analysis \cite{carlsson2009theory}, generally do not decompose into (irreducible) blockcodes. 
However, obstructions to decomposability into blockcodes are themselves informative invariants about the data.
It is therefore useful to describe special properties of direct sums of blockcodes.  
One is the existence of metric structure of some sort.

Metric structures make it possible to split off sub-representations of $\nabla$ when $k=\C$.  
In the case where $\smallcat{1}$ is a group, metric structure on $\nabla$ amounts to a lift to a category of inner product spaces and isometries. 
The existence of such a lift in this case is characterized by precompactness for the image of $\nabla$ when regarded as a homomorphism to $GL(n)$, because $U(n)$ is the maximal compact subgroup unique up to conjugacy in $GL(n)$.
An immediate consequence is that $\nabla$ decomposes into irreducible blockcodes when $\smallcat{1}$ is a finite group (Maschke's Theorem).  
In the general case, metric structure on $\nabla$ amounts to a lift to a category of inner product spaces and \textit{partial isometries}, isometries on orthogonal complements of kernels. 
Such a lift exists if $\smallcat{1}$ is a commutative, idempotent monoid  [Lemma \ref{lem:unitary.projections}].
Such a lift also exists if $\nabla$ is a $1$-dimensional persistence module.  
However, it is difficult to completely characterize the existence of such lifts (cf. \cite{mbekhta2009classes}.)
Part of the problem is that there is no clear analogue of $U(n)$: partial isometries do not generally compose to collectively form a single, concrete category (cf. \cite{hines2010structure}).  

Whenever some collection of partial isometries does happen to form the morphisms of some category, the closure of that category under adjoints has a special dagger structure.
An \textit{inverse category} $\smallcat{1}$ is a category in which every morphism $\zeta$ admits a unique \textit{pseudo-inverse} $\zeta^{\dagger}$ in the sense that $\zeta\zeta^\dagger\zeta=\zeta,\,\zeta^\dagger\zeta\zeta^\dagger=\zeta^\dagger$  \cite{hines2010structure}.
One class of examples consists of groupoids, say of vector spaces and isomorphisms. 
Another class of examples consists of adjoint-closed categories of inner product spaces and partial isometries.
For the special case $k=\C$, representations of inverse categories interpolate between diagrams in the first and second classes [Proposition \ref{prop:inverse-category-representation}].
Representations of finite inverse categories whose endomorphisms are all idempotent decompose into blockcodes, as a straightforward consequence of a larger structure theorem for finite inverse categories \cite[Theorem 4.1]{linckelmann2013inverse}.
Conversely, direct sums of blockcodes factor through inverse categories [Proposition \ref{prop:one-way}].
Mackey functors \cite{dress1971notes} can be characterized by their factorizability through certain inverse categories \cite[\S 6.4]{linckelmann2013inverse}. 
Thus motivated, the goal is to characterize factorizability of representations through inverse categories.

The idea behind the result [Theorem \ref{thm:inverse-extendability}] is that the salient structure of a representation $\nabla$ is abstractly captured by a diagram $\flag_\nabla$ [Definition \ref{defn:multiflag}] of posets, of subspaces generated by the operations of taking kernels, images, intersections, and inverse images in $\nabla$.
This diagram $\flag_{\nabla}$ functions as a generalized sort of flag, in a sense expounded elsewhere \cite{ogle2018structure} that a decomposition of $\nabla$ into simpler representations can be constructed as some sort of associated graded of $\flag_{\nabla}$ under a general position condition on $\flag_{\nabla}$ and the presence of metric structure on $\nabla$.
The factorizability problem of this paper can be numerically characterized as follows.  
Let $\mu_P$ denote the M\"{o}bius function $P\ra\Z$ of a finite poset $P$ having all infima, recursively defined by $\mu_P(\min\,P)=1$ and $\mu_P(y)=-\sum_{x<_Py}\mu_P(x)$ for all $y\neq\min\,P$.  
M\"{o}bius functions, used for generalized inclusion-exclusion counting formulas, are naturally used to decompose representations of, for example, finite inverse monoids \cite[Theorem 3.2]{steinberg2016representation}, more general finite inverse categories \cite[Theorem 4.1]{linckelmann2013inverse}, posets satisfying a certain finiteness condition \cite[Theorem 3.14]{kim2018generalized}, and constructible sheaves \cite[Theorem 4.1]{patel2018generalized}.
M\"{o}bius functions are used in this paper to characterize factorizability through inverse categories.  

\newcommand{\InverseExtendability}{
  The following are equivalent for a representation 
  $$\nabla:\smallcat{1}\ra\VECTORSPACES_k.$$
  \begin{enumerate}
    \item\label{item:factors} $\nabla$ factors through an inverse category
    \item\label{item:formula} For each $\smallcat{1}$-object $o$ and $\mathfrak{b},\mathfrak{c}\in\flag_\nabla(o)$, $\sum_{\mathfrak{a}\subset\mathfrak{b}}\mu_{\flag_{\nabla}(o)}(\mathfrak{a})(\dim\mathfrak{a}-\dim(\mathfrak{a}\cap\mathfrak{c}))\geqslant 0$.
  \end{enumerate}
}

\begin{thm:inverse-extendability}
  \InverseExtendability{}
\end{thm:inverse-extendability}

Representations of finite quivers without cycles that factor through inverse categories factor through finite inverse categories whose endomorphisms are all idempotent [Lemma \ref{lem:quivers}].
A consequence is the following sufficient condition for representations of such finite quivers without cycles to decompose into blockcodes.

\newcommand{\Decomposability}{
  The following are equivalent for a representation
  $$\nabla:\smallcat{1}\ra\VECTORSPACES_k$$
  of a free category $\smallcat{1}$ on a directed graph having no undirected cycles.
  \begin{enumerate}
    \item $\nabla$ factors through an inverse category
    \item $\nabla$ factors through a finite inverse category whose submonoids are all idempotent
    \item $\nabla$ is a direct sum of blockcodes
    \item For each $\smallcat{1}$-object $o$ and $\mathfrak{b},\mathfrak{c}\in\flag_{\nabla}(o)$, $\sum_{\mathfrak{a}\subset\mathfrak{b}}\mu_P(\mathfrak{a})(\dim\mathfrak{a}-\dim(\mathfrak{a}\cap\mathfrak{c}))\geqslant 0$.
  \end{enumerate}
}

\begin{cor:decomposability}
  \Decomposability{}
\end{cor:decomposability}

\subsection*{Organization}
Some conventions are established in \S\ref{sec:conventions}.
A categorical abstraction of pseudo-inverses for matrices is recalled in \S\ref{sec:pseudo-inverses}.  
Representations of categories are investigated in \S\ref{sec:representations}.
Some examples [Examples \ref{eg:trisection}, \ref{eg:bisection}] illustrate the tractability of the factorizability criterion.  

\subsection{Conventions}\label{sec:conventions}
This section fixes some conventions and recalls some basic definitions.  

\subsubsection{Categorical}
Let $\cat{1},\cat{2}$ denote categories.
Let $\smallcat{1},\smallcat{2}$ denote small categories.
Let $\ira$ denote an inclusion of some sort, such as an inclusion of a subset into a set or a subcategory into a category. 
Let $\id_{o}$ denote the identity morphism $o\ra o$ for an object $o$ in a given category.
Write $\dom(\zeta),\cod(\zeta)$ for the domain and codomain, respectively, of a morphism $\zeta$ in a given category.  
A \textit{monoid} will be regarded at once as a set equipped with an associative, unital multiplication as well as a small category with a unique object by identifying the former as the set of all (endo)morphisms of an associated category.  
An \textit{idempotent} in a monoid $M$ is a morphism $\zeta$ in $M$ such that $\zeta^2=\zeta$.  
A monoid is \textit{idempotent} if all of its morphisms are idempotent in it.  
Let $\CATS$ denote the category of all small categories and functors between them.  

\subsubsection{Order-theoretic}
Fix a poset $P$. 
Write $\leqslant_P$ for the partial order associated to $P$; write $x<_Py$ if $x\leqslant_Py$ and $x\neq p$.  
A poset $P$ will be identified with the small category whose arrows $x\ra y$ corresond to relations $x\leqslant_Py$.
Write $\min\,P$ for the (necessarily unique) minimum, if one exists, in $P$.
In the case $P$ is finite and has all infima, let $\mu_P$ denote the M\"{o}bius function $\mu_P:P\ra\Z$ on $P$, recursively defined by $\mu_P(y)=1$ if $y=\min\,P$ and $-\sum_{x<_Py}\mu_P(x)$ for $y\neq\min\,P$. 

\begin{thm:mobius-inversion}
  Consider the following data.
  \begin{enumerate}
    \item finite poset $P$ having all infima
    \item functions $\phi,\hat\phi:P\ra\Z$ such that $\hat\phi(y)=\sum_{x\leqslant_Py}\phi(x)$.
  \end{enumerate}
  Then $\phi(y)=\sum_{x\leqslant_Py}\hat\phi(x)\mu_P(y).$
\end{thm:mobius-inversion}

\subsubsection{Algebraic}
Let $k$ denote an arbitrary field. 
A \textit{ring} will be taken to mean a commutative ring.  
Let $\VECTORSPACES_k$ denote the category of finite-dimensional vector spaces over a field $k$ and linear maps between them.
Write ${\bf 0}$ for the trivial vector space, over a field understood from context.
Write $\mathfrak{a}\leqslant\mathfrak{b}$ if $\mathfrak{a}$ is a vector subspace of a vector space $\mathfrak{b}$ and $\mathfrak{a}<\mathfrak{b}$ if additionally $\mathfrak{a}\neq\mathfrak{b}$.  
Let $\C$ denote the complex field.  
Write $\im\,\phi$, $\kernel\,\phi$ for the image and kernel, respectively, of a linear map $\phi$.

\addtocontents{toc}{\protect\setcounter{tocdepth}{2}}

\section{Inverse categories}\label{sec:pseudo-inverses}
Fix a category $\cat{1}$.
Straightforwardly extending terminology for matrices, define a \textit{pseudo-inverse} to a $\cat{1}$-morphism $\alpha:x\ra y$ to be a $\cat{1}$-morphism $\beta:y\ra x$ such that $\beta\alpha\beta=\beta$ and $\alpha\beta\alpha=\alpha$. 
If $\beta$ is pseudo-inverse to $\alpha$ in $\cat{1}$, then $\alpha$ is pseudo-inverse to $\beta$ in $\cat{1}$ and $\alpha\beta,\beta\alpha$ are idempotent endomorphisms in $\cat{1}$.
Functors preserve pseudo-inverses.  
A category is \textit{inverse} if each of its morphisms admits a unique pseudo-inverse.
In an inverse category, denote the unique pseudo-inverse of a morphism $\zeta$ by $\zeta^\dagger$.

\begin{eg}
	Groupoids are inverse categories.  
\end{eg}

\begin{lem}
	\label{lem:inverse.products}
  Products of small inverse categories in $\CATS$ are inverse.
\end{lem}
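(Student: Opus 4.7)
The plan is to exploit the fact that composition in a product category $\prod_{i\in I}\smallcat{i}$ in $\CATS$ is defined componentwise, so that the two defining equations for a pseudo-inverse are themselves checkable componentwise. Given a morphism $\alpha=(\alpha_i)_{i\in I}$ in $\prod_{i\in I}\smallcat{i}$, the natural candidate pseudo-inverse is $\alpha^\dagger:=(\alpha_i^\dagger)_{i\in I}$, where $\alpha_i^\dagger$ is the unique pseudo-inverse guaranteed by the inverse structure on $\smallcat{i}$.

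First I would verify existence: by componentwise composition,
\[
\alpha\alpha^\dagger\alpha=(\alpha_i\alpha_i^\dagger\alpha_i)_{i\in I}=(\alpha_i)_{i\in I}=\alpha,
\]
and identically $\alpha^\dagger\alpha\alpha^\dagger=\alpha^\dagger$. Next I would argue uniqueness: if $\beta=(\beta_i)_{i\in I}$ is any pseudo-inverse to $\alpha$ in $\prod_{i\in I}\smallcat{i}$, then projecting the equations $\alpha\beta\alpha=\alpha$ and $\beta\alpha\beta=\beta$ onto the $i$-th factor shows $\beta_i$ is pseudo-inverse to $\alpha_i$ in $\smallcat{i}$; uniqueness in the inverse category $\smallcat{i}$ forces $\beta_i=\alpha_i^\dagger$, hence $\beta=\alpha^\dagger$.

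There is no real obstacle here: the argument reduces immediately to the componentwise case once one knows that the product in $\CATS$ is the obvious set-theoretic product of objects and morphisms with componentwise composition and identities, which in particular makes the projection functors $\pi_i:\prod_{j\in I}\smallcat{j}\to\smallcat{i}$ preserve the pseudo-inverse equations. The closest thing to a subtlety is simply to note that the argument handles arbitrary (not merely finite or binary) products in $\CATS$ without modification, since componentwise composition and the indexwise invocation of uniqueness do not depend on the cardinality of $I$.
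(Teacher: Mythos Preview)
Your proof is correct and is essentially the same argument as the paper's: both exploit componentwise composition in the product to construct the candidate pseudo-inverse $(\alpha_i^\dagger)_{i\in I}$ and then project the pseudo-inverse equations to each factor to deduce uniqueness. The only cosmetic difference is that the paper phrases everything in terms of the projection functors $\pi_j$ rather than component notation, but the content is identical.
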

\begin{proof}
	Consider a collection $\{X_i\}_{i\in\mathcal{I}}$ of small inverse categories, indexed by a set $\mathcal{I}$.
	Let $i,j$ denote elements in $\mathcal{I}$.
	Let $X=\prod_{i}X_i$.
	For each $j$, let $\pi_j$ denote the projection morphism of the form $X\ra X_j$. 
	
	For each pseudo-inverse $\zeta^*$ to a $X$-morphism $\zeta$, $\pi_j\zeta^*$ is pseudo-inverse to $\pi_j\zeta$ and hence $\pi_j\zeta^*=(\pi_j\zeta)^\dagger$.
	Conversely for each $X$-morphism $\zeta$, there exists a unique $X$-morphism $\zeta^*$ characterized by $\pi_j\zeta^*=(\pi_j\zeta)^{\dagger}$ for each $j$ and hence $\zeta^*\zeta\zeta^*=\zeta$ and $\zeta=\zeta\zeta^*\zeta$ because both equations hold after applying $\pi_j$, for each $j$.
	Thus $X$-morphisms have unique pseudo-inverses.
\end{proof}

\begin{lem}
	\label{lem:inverse.over.idempotents}
	Consider an inverse category $\cat{1}$ and functor
	$$F:\cat{1}\ra\cat{2}.$$
	For each $\cat{1}$-morphism $\zeta$ with $F(\zeta)$ an idempotent endomorphism, $F(\zeta)=F(\zeta^\dagger)$.
\end{lem}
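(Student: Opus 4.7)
The plan is to apply $F$ to the defining pseudo-inverse relations of $\zeta$ in $\cat{1}$, combine with the idempotence hypothesis on $F(\zeta)$, and then equate $F(\zeta)$ with $F(\zeta^\dagger)$ by invoking uniqueness of pseudo-inverses.

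First, I would apply $F$ to the identities $\zeta\zeta^\dagger\zeta = \zeta$ and $\zeta^\dagger\zeta\zeta^\dagger = \zeta^\dagger$, obtaining
\[F(\zeta)F(\zeta^\dagger)F(\zeta) = F(\zeta), \qquad F(\zeta^\dagger)F(\zeta)F(\zeta^\dagger) = F(\zeta^\dagger),\]
so that $F(\zeta^\dagger)$ is a pseudo-inverse of $F(\zeta)$ in $\cat{2}$. Next, I would exploit the hypothesis $F(\zeta)^2 = F(\zeta)$: this forces $F(\zeta)^3 = F(\zeta)$, so $F(\zeta)$ itself satisfies both defining relations of a pseudo-inverse to $F(\zeta)$ in $\cat{2}$. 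Hence $F(\zeta)$ now has two apparent pseudo-inverses in $\cat{2}$: namely $F(\zeta^\dagger)$ and $F(\zeta)$ itself.

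The final step is to identify these two candidates. My plan is to lift the uniqueness argument back to $\cat{1}$: produce a $\cat{1}$-morphism $\eta : \cod\zeta \to \dom\zeta$ that pseudo-inverts $\zeta$ in $\cat{1}$ and for which $F(\eta) = F(\zeta)$. Uniqueness of pseudo-inverses in the inverse category $\cat{1}$ would then force $\eta = \zeta^\dagger$, whence $F(\zeta) = F(\eta) = F(\zeta^\dagger)$.

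The hard part will be producing such an $\eta$: the idempotence of $F(\zeta)$ sits downstream of $F$, with no obvious witness in $\cat{1}$. I expect the author's argument to bypass this by a direct algebraic manipulation inside $\cat{2}$, substituting $F(\zeta) = F(\zeta)^2$ into $F(\zeta^\dagger) = F(\zeta^\dagger)F(\zeta)F(\zeta^\dagger)$ to obtain
\[F(\zeta^\dagger) = [F(\zeta^\dagger)F(\zeta)] \cdot [F(\zeta)F(\zeta^\dagger)],\]
and then collapsing this product of idempotents using the inverse-category absorption identities $F(\zeta)F(\zeta^\dagger\zeta) = F(\zeta)$ and $F(\zeta^\dagger\zeta)F(\zeta^\dagger) = F(\zeta^\dagger)$ inherited from $\cat{1}$.
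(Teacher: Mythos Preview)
Your last paragraph lands on the right intermediate expression: with $\alpha=F(\zeta)$ and $\beta=F(\zeta^\dagger)$, inserting $\alpha=\alpha^2$ into $\beta=\beta\alpha\beta$ indeed gives $\beta=(\beta\alpha)(\alpha\beta)$. The gap is in the ``collapsing'' step. The absorption identities you cite, $\alpha(\beta\alpha)=\alpha$ and $(\beta\alpha)\beta=\beta$, are just the pseudo-inverse relations again, and by themselves they do not reduce $(\beta\alpha)(\alpha\beta)$ to $\alpha$. Try it: multiplying either side by $\alpha$ or $\beta$ and applying those identities only returns tautologies.

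What the paper actually uses is the \emph{commutativity} of the two idempotents $\beta\alpha=F(\zeta^\dagger\zeta)$ and $\alpha\beta=F(\zeta\zeta^\dagger)$. This is where the inverse structure of $\cat{1}$ enters: $\zeta^\dagger\zeta$ and $\zeta\zeta^\dagger$ are idempotents in $\cat{1}$, and idempotent endomorphisms in an inverse category commute [Lemma~\ref{lem:inverse.characterization}], so $(\zeta^\dagger\zeta)(\zeta\zeta^\dagger)=(\zeta\zeta^\dagger)(\zeta^\dagger\zeta)$ and hence $(\beta\alpha)(\alpha\beta)=(\alpha\beta)(\beta\alpha)$ after applying $F$. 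With this in hand the chain closes:
\[
\alpha=\alpha\beta\alpha=\alpha(\beta\alpha\beta)\alpha=\alpha(\beta\alpha)(\alpha\beta)\alpha=\alpha(\alpha\beta)(\beta\alpha)\alpha=(\alpha\beta)(\beta\alpha)=(\beta\alpha)(\alpha\beta)=\beta\alpha\beta=\beta,
\]
using $\alpha^2=\alpha$ to absorb the outer $\alpha$'s. Your lifting strategy (producing $\eta\in\cat{1}$ with $F(\eta)=F(\zeta)$ pseudo-inverting $\zeta$) is, as you suspected, not the route taken and has no evident candidate for $\eta$; the whole argument stays in $\cat{2}$, but it needs one more fact pulled from $\cat{1}$ than you listed.
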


The proof, almost word-for-word taken from a proof for the inverse monoid case \cite[Theorem 7.4]{clifford1967algebraic}, is included for completeness.

\begin{proof}
	For brevity, let $\alpha=F(\zeta)$, $\beta=F(\zeta^\dagger)$.  
	Then $\alpha$ is pseudo-inverse to $\beta$ and $$(\beta\alpha)(\alpha\beta)=F((\zeta^\dagger\zeta)(\zeta\zeta^\dagger))=F((\zeta\zeta^\dagger)(\zeta^\dagger\zeta))=(\alpha\beta)(\beta\alpha).$$
	Hence $\alpha=\alpha\beta\alpha=\alpha(\beta\alpha\beta)\alpha=\alpha(\beta\alpha^2\beta)\alpha=\alpha(\beta\alpha)(\alpha\beta)\alpha=\alpha(\alpha\beta)(\beta\alpha)\alpha=(\alpha\beta)(\beta\alpha)=(\beta\alpha)(\alpha\beta)=\beta\alpha\beta=\beta$.
\end{proof}

All endomorphism monoids of inverse categories are inverse monoids.  
In an inverse monoid $M$, every idempotent is of the form $xx^{\dagger}$.  
Inverse monoids are exactly the monoids whose morphisms admit pseudo-inverses and whose idempotents commute \cite[Theorem 5.1.1]{howie1995fundamentals}.  
The following characterization of inverse categories straightforwardly generalizes this latter characterization of inverse monoids.  

\begin{lem}
	\label{lem:inverse.characterization}
	The following are equivalent for a category $\cat{1}$.  
	\begin{enumerate}
		\item $\cat{1}$ is inverse.
		\item Each $\cat{1}$-morphism has a pseudo-inverse and the idempotents in each monoid $\cat{1}(o,o)$ commute for each $\cat{1}$-object $o$.
	\end{enumerate}
\end{lem}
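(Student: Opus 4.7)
The plan is to mirror the classical Clifford--Preston argument (cf.\ \cite[Theorem 5.1.1]{howie1995fundamentals}) that a regular monoid is inverse iff its idempotents commute, taking care to keep every composite that arises inside a single endomorphism monoid so that the hypothesis of (2) can be applied.

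For (1) $\Rightarrow$ (2), existence of pseudo-inverses is definitional. For commutativity, fix idempotents $e,f \in \cat{1}(o,o)$, each of which is its own pseudo-inverse. Set $x = (ef)^\dagger$ and note that $fxe$ also lies in $\cat{1}(o,o)$. The key step is to verify, using $e^2=e$, $f^2=f$ and the defining identities $ef\cdot x\cdot ef = ef$ and $x\cdot ef\cdot x = x$, that $fxe$ is likewise a pseudo-inverse of $ef$; by uniqueness $x = fxe$, and the same computation shows $x$ is idempotent. Since idempotents are their own pseudo-inverse, uniqueness applied to $x$ forces $ef = x$, so $ef$ is idempotent, and by symmetry so is $fe$. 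A direct check then shows $(ef)(fe)(ef) = efef = ef$ and $(fe)(ef)(fe) = fefe = fe$, exhibiting $fe$ as a pseudo-inverse of $ef$. Comparing with the pseudo-inverse $ef$ of itself, uniqueness yields $fe = ef$.

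For (2) $\Rightarrow$ (1), suppose $b,b'\colon y\to x$ are both pseudo-inverses of $a\colon x\to y$. Then $ab, ab'\in\cat{1}(y,y)$ and $ba,b'a\in\cat{1}(x,x)$ are idempotents. Substituting $a = ab'a$ into $b = bab$ produces $b = b(ab')(ab)$; commuting $(ab')$ past $(ab)$ in $\cat{1}(y,y)$ and absorbing $bab = b$ gives $b = b(ab')$, and symmetrically $b' = b'(ab)$. Rewriting once more,
\[ b \;=\; (ba)\,b' \;=\; (ba)(b'a)\,b \;=\; (b'a)(ba)\,b \;=\; (b'a)(bab) \;=\; b'(ab) \;=\; b', \]
where the second equality uses $b' = b'(ab)$ and the third uses commutativity in $\cat{1}(x,x)$. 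Hence pseudo-inverses are unique.

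The main obstacle is the construction of $fxe$ as an alternative pseudo-inverse of $ef$ in the forward direction: the two identities $x\cdot ef\cdot x = x$ and $ef\cdot x\cdot ef = ef$ must be deployed in a very specific order to collapse the double products $xef$ and $efx$ against the idempotent factors. Once that step is in place, the remaining verifications are routine, and the only care required is to check that every commutation of idempotents takes place inside one $\cat{1}(o,o)$, where the hypothesis of (2) supplies what is needed.
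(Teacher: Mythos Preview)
Your proof is correct. For the direction $(2)\Rightarrow(1)$ you give essentially the same chain of equalities as the paper, just with the intermediate facts $b=bab'$ and $b'=b'ab$ isolated first; this is the standard Clifford--Preston computation and matches the paper's argument.

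The direction $(1)\Rightarrow(2)$ is where you diverge. The paper simply observes that each $\cat{1}(o,o)$ is an inverse monoid and cites the classical result \cite[Theorem~5.1.1]{howie1995fundamentals} (or \cite[Theorem~3.2]{steinberg2016representation}) that idempotents in an inverse monoid commute. You instead reprove that result directly: from $x=(ef)^\dagger$ you manufacture the alternative pseudo-inverse $fxe$, deduce $x=fxe$ is idempotent, conclude $ef=x$ is idempotent, and then exhibit $fe$ as yet another pseudo-inverse of $ef$ to force $ef=fe$. This is exactly the classical argument underlying the cited theorem, so what you gain is self-containment at the cost of length; the paper's version is shorter but relies on the external reference. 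Both approaches are confined to a single endomorphism monoid, so the passage from monoids to categories is trivial either way.
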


The proof of (1)$\implies$(2) bootstraps the inverse monoid case \cite[Theorem 3.2]{steinberg2016representation}.
The proof of (2)$\implies$(1), almost word-for-word taken from a proof for (2)$\implies$(1) in the inverse monoid case (eg. proof of \cite[Theorem 3.2]{steinberg2016representation}), is included for completeness.    
An alternative proof for inverse categories is given elsewhere \cite[Theorem 2.20]{macedo2012inverse}.  

\begin{proof}
	Suppose (1).  
	Then every $\cat{1}$-morphism has a pseudo-inverse.  
	For each $\cat{1}$-object $o$, $\cat{1}(o,o)$ is an inverse monoid and hence its idempotents commute \cite[Theorem 3.2]{steinberg2016representation}.
	Hence (2).

	Suppose (2).  
	Consider two pseudo-inverses $\beta,\gamma$ to a morphism $\alpha$ in $\cat{1}$.  
	Then $\beta=\beta\alpha\beta=\beta\alpha\gamma\alpha\beta=\beta\alpha\gamma(\alpha\gamma)(\alpha\beta)=\beta\alpha\gamma(\alpha\beta)(\alpha\gamma)=(\beta\alpha)(\gamma\alpha)\beta\alpha\gamma=(\gamma\alpha)(\beta\alpha)\beta\alpha\gamma=\gamma(\alpha\beta\alpha)\beta\alpha\gamma=\gamma\alpha\beta\alpha\gamma=\gamma\alpha\gamma=\gamma$.
	Hence (1).
\end{proof}

\begin{eg}
	Commutative, idempotent monoids are inverse monoids. 
\end{eg}

\begin{lem}
	\label{lem:involution}
	For an inverse category $\cat{1}$, $(-)^{\dagger}$ defines an involution
	$$(-)^{\dagger}:\OP{\cat{1}}\cong\cat{1}.$$
\end{lem}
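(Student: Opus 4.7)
The plan is to define the claimed involution as the functor $D:\OP{\cat{1}}\ra\cat{1}$ that is the identity on objects and sends a morphism $\zeta$ (viewed in $\OP{\cat{1}}$) to its unique pseudo-inverse $\zeta^{\dagger}$ in $\cat{1}$. I then need to verify three things: that $D$ is functorial, that $D$ is an involution in the sense that its composite with its opposite is the identity, and that both together give an isomorphism of categories. The bulk of the work is functoriality.

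First I would dispense with identities: since $\id_{o}\cdot\id_{o}\cdot\id_{o}=\id_{o}$, the identity $\id_{o}$ is its own pseudo-inverse, so uniqueness gives $\id_{o}^{\dagger}=\id_{o}$. For the composition law, I would establish the identity $(\alpha\beta)^{\dagger}=\beta^{\dagger}\alpha^{\dagger}$ whenever the composite $\alpha\beta$ is defined in $\cat{1}$, which is exactly the functoriality condition for a functor out of $\OP{\cat{1}}$. To prove it, it suffices to exhibit $\beta^{\dagger}\alpha^{\dagger}$ as a pseudo-inverse of $\alpha\beta$ and then invoke uniqueness. Let $\beta:x\ra y$ and $\alpha:y\ra z$. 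The four endomorphisms $\alpha\alpha^{\dagger},\alpha^{\dagger}\alpha,\beta\beta^{\dagger},\beta^{\dagger}\beta$ are idempotent in $\cat{1}$, and in particular $\alpha^{\dagger}\alpha$ and $\beta\beta^{\dagger}$ both lie in the monoid $\cat{1}(y,y)$ and hence commute by Lemma \ref{lem:inverse.characterization}. Using this, a direct manipulation gives
\[
(\alpha\beta)(\beta^{\dagger}\alpha^{\dagger})(\alpha\beta)=\alpha(\beta\beta^{\dagger})(\alpha^{\dagger}\alpha)\beta=\alpha(\alpha^{\dagger}\alpha)(\beta\beta^{\dagger})\beta=(\alpha\alpha^{\dagger}\alpha)(\beta\beta^{\dagger}\beta)=\alpha\beta,
\]
and symmetrically $(\beta^{\dagger}\alpha^{\dagger})(\alpha\beta)(\beta^{\dagger}\alpha^{\dagger})=\beta^{\dagger}\alpha^{\dagger}$. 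So $\beta^{\dagger}\alpha^{\dagger}$ is a pseudo-inverse to $\alpha\beta$, and uniqueness yields $(\alpha\beta)^{\dagger}=\beta^{\dagger}\alpha^{\dagger}$. This is the functoriality of $D$.

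The involution property is then essentially formal. The defining equations $\zeta\zeta^{\dagger}\zeta=\zeta$ and $\zeta^{\dagger}\zeta\zeta^{\dagger}=\zeta^{\dagger}$ are symmetric in $\zeta$ and $\zeta^{\dagger}$, so $\zeta$ is itself a pseudo-inverse to $\zeta^{\dagger}$, and the uniqueness of pseudo-inverses forces $(\zeta^{\dagger})^{\dagger}=\zeta$. Combined with $\id^{\dagger}=\id$ and the composition formula, this says that $D$ composed with its opposite functor $\OP{D}:\cat{1}\ra\OP{\cat{1}}$ is the identity on both $\cat{1}$ and $\OP{\cat{1}}$, so $D$ is an isomorphism of categories and an involution in the stated sense.

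The main obstacle is the single algebraic identity $(\alpha\beta)^{\dagger}=\beta^{\dagger}\alpha^{\dagger}$; once the commutativity of idempotents in each endomorphism monoid is in hand from Lemma \ref{lem:inverse.characterization}, the rest of the proof is bookkeeping with the defining equations of pseudo-inverses.
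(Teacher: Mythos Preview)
Your proof is correct and follows essentially the same approach as the paper: both verify $\id_o^{\dagger}=\id_o$, establish the composition law $(\alpha\beta)^{\dagger}=\beta^{\dagger}\alpha^{\dagger}$ by checking the pseudo-inverse axioms directly using commutativity of idempotents from Lemma~\ref{lem:inverse.characterization}, and then obtain $(\zeta^{\dagger})^{\dagger}=\zeta$ from the symmetry of the defining equations. The only difference is cosmetic (your naming of the composable pair versus the paper's) and that you spell out the final isomorphism-of-categories bookkeeping a bit more explicitly.
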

\begin{proof}
	For each $\cat{1}$-object $o$, $\id_o$ pseudo-inverse to iteslf and hence $\id_o^\dagger=\id_o$ by uniqueness of pseudo-inverses.   

	Consider a composite $\beta\alpha$ of $\cat{1}$-morphisms.  
	First note $\beta\alpha\alpha^\dagger\beta^\dagger\beta\alpha=\beta\beta^\dagger\beta\alpha\alpha^\dagger\alpha=\beta\alpha$, where the second equality follows because idempotent endomorphisms commute in $\cat{1}$ [Lemma \ref{lem:inverse.characterization}].
	Then note $\alpha^\dagger\beta^\dagger\beta\alpha\alpha^\dagger\beta^\dagger=\alpha^\dagger\alpha\alpha^\dagger\beta^\dagger\beta\beta^{\dagger}=\alpha^\dagger\beta^\dagger$, where the second equality follows because idempotent endomorphisms commute in $\cat{1}$ [Lemma \ref{lem:inverse.characterization}].
	Hence $\alpha^\dagger\beta^\dagger$ is pseudo-inverse to $\beta\alpha$.
	Hence $(\beta\alpha)^\dagger=\alpha^\dagger\beta^\dagger$ by uniqueness of pseudo-inverses.  
	
	For each $\cat{1}$-morphism $\zeta$, $\zeta^\dagger$ is pseudo-inverse to $\zeta$ and hence $\zeta=(\zeta^\dagger)^\dagger$ by uniqueness of pseudo-inverses.  
\end{proof}

While not obvious from the definition or even the above characterization, images of inverse categories are inverse.
The proof, almost word-for-word taken from a proof for the inverse monoid case \cite[Theorem 7.6]{clifford1967algebraic}, is included for completeness.

\begin{lem}
	\label{lem:wlog}
	Consider an inverse category $\cat{1}$.
	Each functor
	$$\cat{1}\ra\cat{2}$$
	that is surjective on objects and morphisms has inverse codomain $\cat{2}$.
\end{lem}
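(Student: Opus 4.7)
The plan is to verify the two conditions of Lemma \ref{lem:inverse.characterization}: every $\cat{2}$-morphism has a pseudo-inverse, and the idempotents commute in each endomorphism monoid $\cat{2}(o,o)$. Existence of pseudo-inverses is immediate from surjectivity on morphisms plus functoriality: given $\alpha=F(\zeta)$, the image $F(\zeta^{\dagger})$ is pseudo-inverse to $\alpha$, since functors carry pseudo-inverses to pseudo-inverses.

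For the harder condition that idempotents commute, I would follow the Clifford--Preston template, adapted to the category setting. First, given an idempotent $e\in\cat{2}(o,o)$, lift to some $\zeta:a\to b$ in $\cat{1}$; Lemma \ref{lem:inverse.over.idempotents} yields $F(\zeta^{\dagger})=F(\zeta)=e$, so $F(\zeta\zeta^{\dagger})=e\cdot e=e$, where $\zeta\zeta^{\dagger}\in\cat{1}(b,b)$ is genuinely idempotent. Consequently every idempotent of $\cat{2}(o,o)$ arises as the image of an idempotent sitting in some endomorphism monoid of $\cat{1}$, which is the categorical analogue of the Clifford--Preston observation that idempotents of an inverse monoid have the form $xx^{\dagger}$.

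Given two idempotents $e,f\in\cat{2}(o,o)$ lifted to idempotents $e'\in\cat{1}(b,b)$ and $f'\in\cat{1}(d,d)$, Lemma \ref{lem:inverse.characterization} applied to the inverse category $\cat{1}$ forces $e'f'=f'e'$ in their common endomorphism monoid whenever $b=d$, whence $ef=F(e')F(f')=F(e'f')=F(f'e')=fe$ by functoriality. The main obstacle is the residual case $b\neq d$: in the one-object monoid setting of Clifford--Preston this issue is invisible, but in the category setting the two idempotent lifts may a priori inhabit distinct hom-sets of $\cat{1}$. I would resolve this by exploiting surjectivity on morphisms a second time: choose any $o'\in F^{-1}(o)$ and lift the identities $\id_{o}:o\to o$ connecting the various fibers, then sandwich $e'$ and $f'$ between such lifts (and their pseudo-inverses) to transport both into the single endomorphism monoid $\cat{1}(o',o')$. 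Once both idempotent lifts live there, commutativity in the inverse monoid $\cat{1}(o',o')$ transfers to $\cat{2}(o,o)$ under $F$, and Lemma \ref{lem:inverse.characterization} finishes the proof.
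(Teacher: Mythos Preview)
Your overall strategy matches the paper's: verify the two conditions of Lemma~\ref{lem:inverse.characterization}, obtain existence of pseudo-inverses from surjectivity, and use Lemma~\ref{lem:inverse.over.idempotents} to replace each idempotent of $\cat{2}(o,o)$ by the image of a genuine idempotent in some endomorphism monoid of $\cat{1}$. You actually go further than the paper in flagging the ``residual case $b\neq d$''; the paper's proof simply writes $F((\alpha\alpha^\dagger)(\beta\beta^\dagger))$ as though the two idempotent lifts were composable in $\cat{1}$, without comment.

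Your proposed fix, however, has a genuine gap of its own. Surjectivity on morphisms only says that every $\cat{2}$-morphism has \emph{some} preimage; it does not let you prescribe the domain and codomain of that preimage. So there is no reason $\id_o$ should lift to a $\cat{1}$-morphism connecting your chosen $o'$ to $b$ (or to $d$), and the transport step cannot be carried out in general. This is not a merely technical lacuna: the lemma is false as literally stated. Take $\cat{1}$ to be the disjoint union of two two-element semilattices, with objects $a,b$, hom-monoids $\cat{1}(a,a)=\{\id_a,e'\}$ and $\cat{1}(b,b)=\{\id_b,f'\}$, and $\cat{1}(a,b)=\cat{1}(b,a)=\varnothing$; take $\cat{2}$ to be the one-object category on the monoid $\{1,e,f\}$ with $e^2=e$, $f^2=f$, $ef=e$, $fe=f$. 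The assignment $e'\mapsto e$, $f'\mapsto f$ is a functor, surjective on objects and morphisms, from an inverse category to a monoid that is not inverse (here $e$ admits both $e$ and $f$ as pseudo-inverses). Both the paper's computation and your transport idea become correct if ``surjective on morphisms'' is strengthened to \emph{full}; under that hypothesis one may simply lift both idempotents into a single $\cat{1}(b,b)$ from the outset and the $b\neq d$ case disappears.
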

\begin{proof}
	Let $F$ denote a functor $\cat{1}\ra\cat{2}$.  

	Each $\cat{2}$-morphism has a pseudo-inverse by $F$ preserving pseudo-inverses and surjective.

	Fix a $\cat{2}$-object $o$.  
	Consider a pair of idempotent endomorphisms in $\cat{2}(o,o)$, of the form $F(\alpha)$ and $F(\beta)$ by $F$ surjective.  
	Note $F(\alpha)F(\beta)=F(\alpha)^2F(\beta)^2=F(\alpha)(\alpha^{\dagger})F(\beta)F(\beta^{\dagger})=F((\alpha\alpha^\dagger)(\beta\beta^\dagger))=F((\beta\beta^\dagger)(\alpha\alpha^{\dagger}))=F(\beta)^2F(\alpha)^2=F(\beta)F(\alpha)$, with the third and sixth equalities by $F(\alpha)=F(\alpha^\dagger)$ and $F(\beta)=F(\beta^\dagger)$ [Lemma \ref{lem:inverse.over.idempotents}].

	Hence $\cat{2}$ is inverse [Lemma \ref{lem:inverse.characterization}].
\end{proof}

There exists an equivalence between the category of small inverse categories and a category of certain double categories \cite{dewolf2015ehresmann}.  
This paper just gives the horizontal groupoid part of this construction.
Fix an inverse category $\cat{1}$.  
Define a groupoid $\gpd{\cat{1}}$ as follows:
\begin{enumerate}
		\item the objects of $\gpd{\cat{1}}$ are all idempotents in submonoids of $\cat{1}$
		\item $\gpd{\cat{1}}(e_1,e_2)$ is the set of all $\cat{1}$-morphisms $\zeta$ for which $\zeta^\dagger\zeta=e_1$ and $\zeta\zeta^\dagger=e_2$
		\item composition in $\gpd{\cat{1}}$ is defined by composition in $\cat{1}$
		\item the inverse of a $\gpd{\cat{1}}$-morphisms $\zeta$ is the pseudo-inverse $\zeta^\dagger$ in $\cat{1}$
\end{enumerate}

Write $[\zeta]$ for the $\gpd{\cat{1}}$-morphism $\zeta^\dagger\zeta\ra\zeta^\dagger\zeta$ defined by a $\cat{1}$-morphism $\zeta$. 

\begin{lem}
	\label{lem:gpd-factorization}
	Fix an inverse category $\cat{1}$.  
  For each factorization $\gamma=\beta\alpha$ in $\cat{1}$, 
	$$[\gamma]=[\beta\alpha\alpha^\dagger][\beta^\dagger\beta\alpha].$$
\end{lem}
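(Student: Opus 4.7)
The plan is to observe that morphisms of $\gpd{\cat{1}}$ are just $\cat{1}$-morphisms with specified source and target idempotents, and composition in $\gpd{\cat{1}}$ is inherited from composition in $\cat{1}$. So the claimed equation $[\gamma]=[\beta\alpha\alpha^\dagger][\beta^\dagger\beta\alpha]$ breaks into two verifications: (i) the underlying $\cat{1}$-composite $\beta\alpha\alpha^\dagger\cdot\beta^\dagger\beta\alpha$ equals $\gamma$, and (ii) the source and target idempotents of the two bracketed morphisms on the right match up so that their composite in $\gpd{\cat{1}}$ is defined and lies in the same hom-set as $[\gamma]$.

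For (i), write $\alpha:x\ra y$ and $\beta:y\ra z$, so that $\alpha\alpha^\dagger$ and $\beta^\dagger\beta$ are both idempotents in the endomorphism monoid $\cat{1}(y,y)$ and hence commute by Lemma \ref{lem:inverse.characterization}. Swapping them inside the string $\beta\alpha\alpha^\dagger\beta^\dagger\beta\alpha$ produces $\beta\beta^\dagger\beta\alpha\alpha^\dagger\alpha$, which collapses to $\beta\alpha=\gamma$ by the defining pseudo-inverse identities $\beta\beta^\dagger\beta=\beta$ and $\alpha\alpha^\dagger\alpha=\alpha$.

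For (ii), I compute each relevant source/target idempotent directly. Using the contravariance of $(-)^\dagger$ from Lemma \ref{lem:involution}, together with the fact that any idempotent $e$ satisfies $e^\dagger=e$ (immediate from the definition of pseudo-inverse plus uniqueness), one obtains $(\beta\alpha\alpha^\dagger)^\dagger=\alpha\alpha^\dagger\beta^\dagger$ and $(\beta^\dagger\beta\alpha)^\dagger=\alpha^\dagger\beta^\dagger\beta$. A few applications of the commuting-idempotents identity then show: the source of $[\beta^\dagger\beta\alpha]$ and the source $\gamma^\dagger\gamma$ of $[\gamma]$ are both $\alpha^\dagger\beta^\dagger\beta\alpha$; the target of $[\beta\alpha\alpha^\dagger]$ and the target $\gamma\gamma^\dagger$ of $[\gamma]$ are both $\beta\alpha\alpha^\dagger\beta^\dagger$; and the target of $[\beta^\dagger\beta\alpha]$ agrees with the source of $[\beta\alpha\alpha^\dagger]$, both reducing to $\alpha\alpha^\dagger\beta^\dagger\beta=\beta^\dagger\beta\alpha\alpha^\dagger$.

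The main obstacle here is not conceptual but purely bookkeeping: the entire argument is a repeated, disciplined application of Lemma \ref{lem:involution}, the two pseudo-inverse identities, and the commutativity of idempotents from Lemma \ref{lem:inverse.characterization}, with no novel ingredient beyond these.
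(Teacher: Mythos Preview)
Your proof is correct and follows essentially the same approach as the paper's: both verify that the two bracketed morphisms are composable in $\gpd{\cat{1}}$ by computing the relevant idempotents via Lemma~\ref{lem:involution} and the commutativity of idempotents from Lemma~\ref{lem:inverse.characterization}, and both verify that the underlying $\cat{1}$-composite equals $\gamma$ by the same idempotent swap $\alpha\alpha^\dagger\beta^\dagger\beta=\beta^\dagger\beta\alpha\alpha^\dagger$ followed by the pseudo-inverse identities. The paper is slightly more economical in that it only checks $\cod[\beta^\dagger\beta\alpha]=\dom[\beta\alpha\alpha^\dagger]$ and the underlying equality, whereas you additionally verify the outer source and target match those of $[\gamma]$; but since a $\gpd{\cat{1}}$-morphism is determined by its underlying $\cat{1}$-morphism, those extra checks are automatic once (i) holds, so the difference is cosmetic.
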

\begin{proof}
	Note first that $\cod\,[\beta^\dagger\beta\alpha]=(\beta^\dagger\beta\alpha)(\beta^\dagger\beta\alpha)^\dagger= \beta^\dagger\beta\alpha\alpha^\dagger\beta^\dagger\beta=\beta^\dagger\beta\beta^\dagger\beta\alpha\alpha^\dagger=\beta^\dagger\beta\alpha\alpha^\dagger=\beta^\dagger\beta\alpha\alpha^\dagger\alpha\alpha^\dagger=\alpha\alpha^\dagger\beta^\dagger\beta\alpha\alpha^\dagger=(\beta\alpha\alpha^\dagger)^\dagger\beta\alpha\alpha^\dagger=\dom\,[\beta\alpha\alpha^\dagger]$, with the third and sixth equalities because idempotents commute [Lemma \ref{lem:inverse.characterization}] and the second and seventh equalities because $(-)^\dagger$ is a functorial involution [Lemma \ref{lem:involution}].    
	And $\gamma=\beta\alpha=\beta\beta^\dagger\beta\alpha\alpha^\dagger\alpha=\beta\alpha\alpha^\dagger\beta^\dagger\beta\alpha$, 
	with the last equality because idempotents commute [Lemma \ref{lem:inverse.characterization}].  
\end{proof}

A finite collection of morphisms in an inverse category need not generate a finite inverse subcategory.
The following lemma gives a sufficient condition for when that happens.

\begin{lem}
	\label{lem:quivers}
	Consider the following data.
	\begin{enumerate}
    \item finite directed graph $Q$ having no undirected cycles
		\item inverse category $\cat{2}$
		\item functor $F:Q^*\ra\cat{2}$ from the free category $Q^*$ on $Q$ that is injective on objects
	\end{enumerate}
	Then $F$ factors through a finite inverse category whose submonoids are all idempotent.  
\end{lem}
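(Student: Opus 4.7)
The plan is to take $\cat{3}$ to be the smallest subcategory of $\cat{2}$ containing $F(Q^*)$ and closed under taking pseudo-inverses in $\cat{2}$. Since pseudo-inverses in $\cat{2}$ are unique and this uniqueness descends to any subcategory closed under them, $\cat{3}$ is an inverse subcategory of $\cat{2}$ and is itself inverse; by construction $F$ factors through $\cat{3}$. It therefore suffices to show that $\cat{3}$ is finite and that every endomorphism in $\cat{3}$ is idempotent.

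The key structural claim is a normal form: every morphism $\zeta \in \cat{3}(F(x),F(y))$ can be written as $\zeta = \pi_{x,y} e_{\zeta}$, where $\pi_{x,y}$ is the canonical lift in $\cat{3}$ of the unique undirected path from $x$ to $y$ in $Q$ (unique because $Q$ has no undirected cycles) and $e_{\zeta}$ is an idempotent endomorphism at $F(x)$. I would prove this by induction on the length of a word $s_n \cdots s_1$ in the generators $\{F(e), F(e)^{\dagger}\}_{e \in E(Q)}$ representing $\zeta$: if the underlying walk in $Q$ is simple, it is the canonical path and $e_{\zeta} = \id$; otherwise the walk backtracks at some position with $v_{i-1} = v_{i+1}$, so the factor $s_{i+1} s_i$ collapses to an elementary idempotent $\epsilon$ at $F(v_{i-1})$ of the form $F(e)^{\dagger} F(e)$ or $F(e) F(e)^{\dagger}$. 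The identity $\epsilon s = s s^{\dagger} \epsilon s = s(s^{\dagger} \epsilon s)$ for any generator $s$ with $\cod(s) = F(v_{i-1})$ follows from $s s^{\dagger} s = s$ together with commutativity of idempotents in endomorphism monoids [Lemma \ref{lem:inverse.characterization}]; applying it repeatedly migrates $\epsilon$ rightward past $s_{i-1}, \ldots, s_1$, producing an idempotent at $F(x)$ and shortening the walk by two steps, after which the induction closes. Setting $x=y$ in the normal form gives $\pi_{x,x} = \id$, so every endomorphism coincides with its idempotent factor. This step crucially uses $F$ being injective on objects: endomorphisms of $\cat{3}$ then correspond to closed walks in $Q$ itself, and closed walks in a forest must backtrack.

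For finiteness, $\cat{3}$ has only $|V(Q)|$ objects, and by the normal form each hom-set $\cat{3}(F(x),F(y))$ is the image of the idempotent semilattice at $F(x)$ under left multiplication by $\pi_{x,y}$, so it suffices to bound the size of this semilattice. The idempotents at $F(x)$ commute [Lemma \ref{lem:inverse.characterization}], and unwinding the migration step shows they are generated under meet by the finitely many canonical conjugates $\pi_{x,z}^{\dagger} \epsilon \pi_{x,z}$ of the elementary idempotents $\epsilon$ attached to edges of $Q$ at endpoints $z$ in the component of $x$; a finitely generated meet-semilattice is finite. The main obstacle is the bookkeeping in this last step: verifying that the nested inductions of the normal-form argument combine correctly so that every accumulated idempotent really lies in the semilattice generated by these canonical conjugates, rather than drifting into conjugates along arbitrary non-canonical walks (which themselves normalize, but only after introducing further idempotents that must be tracked).
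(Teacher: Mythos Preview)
Your approach is correct and takes a genuinely different route from the paper. The paper bypasses word combinatorics by passing to the groupoid $\gpd{\cat{2}}$ associated to the inverse category (objects the idempotents of $\cat{2}$, each $\cat{2}$-morphism $\zeta$ reinterpreted as an isomorphism $\zeta^\dagger\zeta \to \zeta\zeta^\dagger$); since $\zeta\mapsto[\zeta]$ is injective on underlying morphisms, finiteness of $\cat{3}$ reduces to bounding the $\gpd{\cat{2}}$-morphisms that arise, and the factorization lemma $[\beta\alpha]=[\beta\alpha\alpha^\dagger][\beta^\dagger\beta\alpha]$ places each $[\zeta]$ in the image of a functor from the groupoid-completion of a sub-forest of $Q$, whose finiteness and trivial automorphism groups simultaneously give finiteness of $\cat{3}$ and idempotency of its endomorphisms. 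Your normal-form argument is more elementary and self-contained, at the cost of the bookkeeping you flag. That obstacle dissolves if you run the induction on word length with the \emph{strengthened} hypothesis that $e_\zeta$ is already a product of canonical conjugates: applying it to the prefix $u=s_{i-1}\cdots s_1$ (length $<n$) gives $u=\pi_{x,v_{i-1}}e_u$ with $e_u$ canonical, whence $u^\dagger\epsilon u=e_u\,(\pi_{x,v_{i-1}}^\dagger\epsilon\,\pi_{x,v_{i-1}})$ is canonical without any further unwinding, and the length-$(n-2)$ word then closes the step. Equivalently, always choose the \emph{first} backtrack, so that the prefix is a simple walk in the forest and hence literally equals $\pi_{x,v_{i-1}}$.
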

\begin{proof}
	It suffices to take $F$ to be faithful and surjective on objects without loss of generality [Lemma \ref{lem:wlog}]. 
	Let $\cat{1}$ denote the minimal subcategory of $\cat{2}$ containing the image of $F$ and closed under taking pseudo-inverses. 
	It suffices to show the following:
  \begin{enumerate}
	\item $\cat{1}$ is closed under $(-)^{\dagger}$
	\item The $\cat{1}$-morphisms collectively represent finitely many morphisms in $\gpd{\cat{2}}$.
	\end{enumerate}
	For then $\cat{1}$ would be inverse and finite [Lemma \ref{lem:inverse.characterization}].
	Moreover, each connected subgroupoid of $\gpd{\cat{1}}$ would be freely generated by a tree isomorphic to a subtree of the underlying graph of $Q$, so that $[\zeta]$ is an identity and hence $\zeta=\zeta\zeta^\dagger$ for each $\cat{1}$-endomorphism $\zeta$.

  Fix a $\cat{1}$-morphism $\zeta$. 
	Let $[\zeta]$ denote its associated $\gpd{\cat{1}}$-morphism $\zeta^\dagger\zeta\ra\zeta\zeta^\dagger$.
	There exist directed edges $e_1,e_2,\ldots,e_{2n}$ in $Q$ with $\zeta=F(e_1)F(e_2)^\dagger\cdots F(e_{2n-1})F(e_{2n})^{\dagger}$.  

	Then $\zeta^{\dagger}=F(e_{2n})F(e_{2n-1})^{\dagger}\cdots F(e_1)^{\dagger}$ [Lemma \ref{lem:involution}] and hence (1).

	Let $Q_\zeta$ be the directed subgraph of $Q$ whose edges are $e_1,\ldots,e_{2n}$.
	There exist $\cat{1}$-morphisms $G(e_1),G(e_2),\ldots,G(e_{2n})$ such that $[\zeta]=[G(e_1)][G(e_2)^{\dagger}]\cdots [G(e_{2n})^{\dagger}]$ [Lemma \ref{lem:gpd-factorization}].
	Thus $[G-]$ uniquely extends to a functor $[G]$ from the groupoid-completion of $Q_\zeta^*$ to $\gpd{\cat{1}}$, by universal properties of groupoid-completion. 
	Then
	\begin{align*}
	      [G](e_1e_2^{-1}\cdots e_{2n}^{-1})
			&=[G](e_1)[G](e_2^{-1})\cdots [G](e_{2n}^{-1})\\
    	&=[G](e_1)[G](e_2)^{-1}\cdots [G](e_{2n})^{-1}\\
			&=[G(e_1)][G(e_2)^{\dagger}]\cdots [G(e_{2n})^{\dagger}]\\
	  	&=[\zeta].
  \end{align*}
	Thus (2) because the groupoid-completion of $Q_\zeta^*$ is finite by our assumption on $Q$.  
\end{proof}

\section{Representations}\label{sec:representations}
\textit{Representations} will mean diagrams in $\VECTORSPACES_k$.  
Representations of posets $P$ in the sense of this paper are more general than representations of posets studied elsewhere, which are often taken to mean diagrams $P\ra\VECTORSPACES_k$ sending each morphisms to an inclusion (eg. \cite{scharlau1975subspaces}).  
A \textit{direct sum} of representations of the same category is an object-wise direct sum.  
A representation $\nabla:\smallcat{1}\ra\VECTORSPACES_k$ of a small category $\smallcat{1}$ \textit{factors through a category $\smallcat{2}$} if there exist dotted functors making the following diagram commute:
\begin{equation*}
	\begin{tikzcd}
		& \smallcat{2}\ar[dr,dotted]\\
		\smallcat{1}\ar{rr}[below]{\nabla}\ar[ur,dotted] & & \VECTORSPACES_k 
	\end{tikzcd}
\end{equation*}

We make one general observation about representations of pseudo-inverses.

\begin{lem}
	\label{lem:pseudo-inverse.maps}
	For morphism $\alpha$ with pseudo-inverse $\beta$,
	\begin{align*}
		\im\,\alpha&=\im\,\alpha\beta
		&
		\kernel\,\alpha&=\kernel\,\beta\alpha\\
		\im\,\beta&=\im\,\beta\alpha
		&
		\kernel\,\beta&=\kernel\,\alpha\beta
	\end{align*}
\end{lem}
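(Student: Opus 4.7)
The plan is to prove all four identities by elementary inclusion chases, exploiting the two defining equations $\alpha\beta\alpha=\alpha$ and $\beta\alpha\beta=\beta$ together with the symmetry observed at the start of Section~\ref{sec:pseudo-inverses} that $\alpha$ is itself pseudo-inverse to $\beta$. This symmetry immediately pairs the left column with the right column: the two identities in the bottom row follow from the two in the top row by swapping the roles of $\alpha$ and $\beta$. So it suffices to prove $\im\,\alpha=\im\,\alpha\beta$ and $\kernel\,\alpha=\kernel\,\beta\alpha$.

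For $\im\,\alpha=\im\,\alpha\beta$, I would first note the trivial inclusion $\im\,\alpha\beta\subseteq\im\,\alpha$, which holds simply because any vector of the form $\alpha\beta(w)$ is in particular of the form $\alpha(w')$ with $w'=\beta(w)$. For the reverse inclusion, given $v=\alpha(w)\in\im\,\alpha$, rewrite using $\alpha\beta\alpha=\alpha$: $v=\alpha(w)=\alpha\beta\alpha(w)=(\alpha\beta)(\alpha(w))$, which is visibly in $\im\,\alpha\beta$.

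For $\kernel\,\alpha=\kernel\,\beta\alpha$, the inclusion $\kernel\,\alpha\subseteq\kernel\,\beta\alpha$ is immediate since $\alpha(v)=0$ forces $\beta\alpha(v)=\beta(0)=0$. Conversely, if $v\in\kernel\,\beta\alpha$, then applying $\alpha$ and using $\alpha\beta\alpha=\alpha$ gives $\alpha(v)=\alpha\beta\alpha(v)=\alpha(\beta\alpha(v))=\alpha(0)=0$, so $v\in\kernel\,\alpha$.

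There is no real obstacle; these are one-line manipulations using the defining equations. The only thing worth being careful about is invoking the symmetry cleanly to avoid writing the bottom row twice: since pseudo-inverses come in reciprocal pairs, applying the top-row statements to the pair $(\beta,\alpha)$ in place of $(\alpha,\beta)$ yields the bottom row without further work.
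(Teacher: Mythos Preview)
Your proof is correct and follows essentially the same approach as the paper's: both prove the top-row equalities by the two obvious inclusions (one trivial, the other via $\alpha=\alpha\beta\alpha$) and then obtain the bottom row by the symmetry that $\alpha$ is pseudo-inverse to $\beta$. Your write-up is simply a slightly more detailed unpacking of the same argument.
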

\begin{proof}
	To prove $\im\,\alpha=\im\,\alpha\beta$, note that the left side is contained in the right side by $\alpha=\alpha\beta\alpha$ and the right side is contained in the left side.
	The equality $\kernel\,\alpha=\kernel\,\beta\alpha$ similarly follows.
	The other equalities follow by switching the roles of $\alpha,\beta$, as $\alpha$ is a pseudo-inverse to $\beta$.  
\end{proof}

We make one non-standard definition.  

\begin{defn}
  An ($\smallcat{1}$-shaped) \textit{blockcode} is a representation
	$$\smallcat{1}\ra\VECTORSPACES_k$$
	sending each $\smallcat{1}$-morphism to a $0$-map or an isomorphism.
\end{defn}

The goal is to investigate and then characterize special properties of blockcodes and their direct sums. 

\subsection{Idempotent monoids}
Representations of idempotent monoids send endomorphisms to projection operators.  
The following characterization for when representations of idempotent monoid factor through commutative, idempotent monoids resembles an existing characterization in the literature for when projections commute \cite[(3.4)]{rehder1980projections}, but without any assumption about the existence of an inner product.  
	
\begin{lem}
	\label{lem:commute}
	Consider an idempotent monoid $M$ and representation 
	$$\nabla:M\ra\VECTORSPACES_k.$$ 
	The following are equivalent:
	\begin{enumerate}
		\item\label{item:factors.through.commutative.monoid} $\nabla$ factors through an idempotent, commutative monoid
		\item\label{item:image.decomposition} For each pair $\alpha,\beta\in M$, $\im\,\alpha=(\im\,\alpha)\cap(\im\,\beta)+(\im\,\alpha)\cap(\kernel\,\beta)$.
		\item\label{item:rank.decomposition} For each pair $\alpha,\beta\in M$, $\dim\,\im\,\alpha=\dim\,(\im\,\alpha)\cap(\im\,\beta)+\dim\,(\im\,\alpha)\cap(\kernel\,\beta)$.
		\item\label{item:image-kernel.decomposition} For each pair $\alpha,\beta\in M$, $S=S\cap(\im\,\beta)+S\cap(\kernel\,\beta)$ for $S=\im\,\alpha,\,\kernel\,\alpha$.
	\end{enumerate}
\end{lem}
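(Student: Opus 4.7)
The plan is to prove a cycle of implications $(1) \Rightarrow (4) \Rightarrow (2) \Leftrightarrow (3) \Rightarrow (1)$. Two links are immediate. The implication $(4) \Rightarrow (2)$ is tautological, since (2) is the $S = \im\alpha$ case of (4). The equivalence $(2) \Leftrightarrow (3)$ comes from the observation that the sum $(\im\alpha \cap \im\beta) + (\im\alpha \cap \ker\beta)$ is always a direct sum contained in $\im\alpha$: the two summands meet inside $\im\beta \cap \ker\beta$, which vanishes because every element of the idempotent monoid $M$ is sent by $\nabla$ to a projection. Hence the sum equals $\im\alpha$ iff the dimensions match.

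For $(1) \Rightarrow (4)$, set $P = \nabla(\alpha)$ and $Q = \nabla(\beta)$. Factoring $\nabla$ through a commutative idempotent monoid makes $P$ and $Q$ into commuting projections. Given $v \in \im\alpha$, decompose $v = Qv + (v - Qv)$: since $PQ = QP$ and $Pv = v$, one has $P(Qv) = QPv = Qv$, so $Qv \in \im P \cap \im Q$, while $Q(v - Qv) = 0$ and $P(v - Qv) = v - Qv$, so $v - Qv \in \im P \cap \ker Q$. The identical calculation applied to $v \in \ker P$ yields the companion splitting, so (4) holds for both $S = \im\alpha$ and $S = \ker\alpha$.

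The core of the argument is $(3) \Rightarrow (1)$. Fix $\alpha, \beta \in M$ and define the four pieces $V_1 = \im P \cap \im Q$, $V_2 = \im P \cap \ker Q$, $V_3 = \ker P \cap \im Q$, $V_4 = \ker P \cap \ker Q$. Applying (3) to $(\alpha, \beta)$ and then to $(\beta, \alpha)$ immediately gives $\im P = V_1 \oplus V_2$ and $\im Q = V_1 \oplus V_3$. The plan is to promote these to a full decomposition $V = V_1 \oplus V_2 \oplus V_3 \oplus V_4$. Once such a decomposition is in hand, $P$ acts as the identity on $V_1 \oplus V_2$ and as zero on $V_3 \oplus V_4$ while $Q$ acts as the identity on $V_1 \oplus V_3$ and as zero on $V_2 \oplus V_4$, so $PQ = QP$ follows by inspection on each summand, and $\nabla$ then factors through the (now commutative) image submonoid $\nabla(M) \subseteq \End(V)$.

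The main obstacle is extracting the companion decomposition $\ker P = V_3 \oplus V_4$ from the image-side hypothesis (3). The inclusion $V_3 \oplus V_4 \subseteq \ker P$ is automatic (the summands meet in $\im Q \cap \ker Q = 0$), but the reverse inclusion is not formal. My strategy would be to exploit the monoid structure by applying (3) to auxiliary composites such as $\alpha\beta$, $\beta\alpha$, and $\alpha\beta\alpha$, each of which remains idempotent in $M$ and is sent by $\nabla$ to a projection of the form $PQ$, $QP$, or $PQP$ whose image and kernel encode how $Q$ mixes $\im P$ with $\ker P$. Combining the resulting decompositions with the dimension identities $\dim V_1 + \dim V_2 = \dim \im P$ and $\dim V_1 + \dim V_3 = \dim \im Q$ should force $\dim V_4 = \dim V - \dim V_1 - \dim V_2 - \dim V_3$, yielding the desired $\ker P = V_3 \oplus V_4$ and completing the proof.
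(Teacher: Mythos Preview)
Your implications $(1)\Rightarrow(4)$, $(4)\Rightarrow(2)$, and $(2)\Leftrightarrow(3)$ are correct. The gap is entirely in $(3)\Rightarrow(1)$: you correctly isolate the obstacle (promoting the two image splittings to a full decomposition $V=V_1\oplus V_2\oplus V_3\oplus V_4$, equivalently $\ker P=V_3\oplus V_4$), but the proposed fix via composites $\alpha\beta,\beta\alpha,\alpha\beta\alpha$ is only a hope. Once $\im P=V_1\oplus V_2$ and $\im Q=V_1\oplus V_3$ are known, every such composite has image $V_1$, and applying (3) to it returns information you already possess; none of these instances detects $\ker P\cap\ker Q$. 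The paper runs the cycle as $(2)\Rightarrow(4)\Rightarrow(1)$ instead. Its $(4)\Rightarrow(1)$ is the clean step worth borrowing---$\beta$-invariance of \emph{both} $\im\alpha$ and $\ker\alpha$ gives $\alpha\beta v=\alpha\beta(\alpha v)+\alpha\beta(v-\alpha v)=\beta\alpha v+0$ directly---but its $(2)\Rightarrow(4)$ (``intersect $V=V_1\oplus V_3\oplus\ker\beta$ with $\ker\alpha$'') is your missing step in disguise, and intersection with $\ker\alpha$ does not distribute over that direct sum.

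In fact $(2)\Rightarrow(1)$ appears to be false as stated. On $k^4$ with standard basis $e_1,\ldots,e_4$, let $P$ project onto $\langle e_1,e_2\rangle$ along $\langle e_3,e_4\rangle$ and $Q$ project onto $\langle e_1,e_3\rangle$ along $\langle e_2,\,e_1+e_4\rangle$. Then $M=\{I,P,Q,PQ,QP\}$ is a five-element idempotent monoid (one checks $PQP=QP$ and $QPQ=PQ$), and condition (2) holds for every ordered pair drawn from $M$; yet $PQ(e_4)=-e_1\neq 0=QP(e_4)$, so the inclusion $M\hookrightarrow\VECTORSPACES_k$ does not factor through any commutative monoid. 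Here $\ker P=\langle e_3,e_4\rangle$ while $V_3\oplus V_4=\langle e_3\rangle\oplus 0$, so (4) fails for the pair $(P,Q)$ as well. Your dimension-counting strategy cannot succeed on this example, and neither can the paper's intersection step; the equivalence seems to require an additional hypothesis.
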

\begin{proof}
	Let $\alpha,\beta$ denote morphisms in $M$.
	Take $\nabla$ to be an inclusion without loss of generality, so that $M$ is a monoid of operators on a fixed vector space.

	Suppose (\ref{item:factors.through.commutative.monoid}).  
	Consider $\alpha,\beta$.
	Then $\alpha\beta=\beta\alpha$.  
	Then $\beta(\im\,\alpha)=\beta(\alpha(\im\,\alpha))=\alpha(\beta(\im\,\alpha))\subset\im\,\alpha$. 
	Hence $\im\,\alpha$ is $\beta$-invariant.  
	The only invariant spaces of the projection $\beta$ are direct sums of subspaces of $\kernel\,\beta$ with subspaces in $\im\,\beta$.  
	Hence (\ref{item:image.decomposition}).  

  (\ref{item:image.decomposition}) implies (\ref{item:rank.decomposition}) by taking dimensions.  
	(\ref{item:rank.decomposition}) implies  (\ref{item:image.decomposition}) because in $\im\,\alpha=(\im\,\alpha)\cap(\im\,\beta)+(\im\,\alpha)\cap(\kernel\,\beta)$, the right side is contained in the left side and hence (\ref{item:rank.decomposition}) implies that the right side coincides with the left side.

	Suppose  (\ref{item:image.decomposition}).
	The desired equality in (4) holds for $S=\im\,\alpha$ by assumption.
	The desired equality in (4) holds for $S=\kernel\,\alpha$ by intersecting both sides of $V=(\im\,\alpha)\cap(\im\,\beta)\oplus(\im\,\beta)\cap(\kernel\,\alpha)\oplus\kernel\,\beta$ with $\kernel\,\alpha$.
  Hence (\ref{item:image-kernel.decomposition}).

	Suppose (\ref{item:image-kernel.decomposition}).  
	Consider $\alpha,\beta$.  
	
	For $S=\im\,\alpha,\,\kernel\,\alpha$, $\beta(S)=\beta(S\cap(\im\,\beta))+\beta(S\cap(\kernel\,\beta))\subset S$.
	
	Fix $v$ in the domain/codomain of $\alpha,\beta$.
	Then $\alpha\beta\alpha(v)=\beta\alpha(v)$ by $\beta\alpha(v)\in\,\im\,\alpha$.
	Similarly $\alpha\beta(v-\alpha(v))=0$ by $\beta(v-\alpha(v))\in\kernel\,\alpha$.
	Thus $\alpha\beta(v)=\alpha\beta\alpha(v)+\alpha\beta(v-\alpha(v))=\alpha\beta\alpha(v)=\beta\alpha(v)$.
	
	Hence (\ref{item:factors.through.commutative.monoid}).
\end{proof}

The following observation about commuting projection operators and its proof are elementary but included for completeness.  

\begin{lem}
	\label{lem:commuting.projections}
	Consider a commutative, idempotent monoid $M$ and representaton
	$$\nabla:M\ra\VECTORSPACES_k.$$ 
	For all $\alpha,\beta\in M$, $\im\,\nabla(\beta\alpha)=(\im\,\nabla(\alpha))\cap(\im\,\nabla(\beta))$.  
\end{lem}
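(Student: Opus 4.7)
The plan is to reduce to the case where $\nabla$ is an inclusion, so that $M$ may be regarded as a set of pairwise commuting projection operators on a fixed finite-dimensional vector space $V$, and then verify the two inclusions directly.

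First, I would observe that a morphism $\zeta$ in an idempotent monoid is automatically an idempotent, and any representation of an idempotent monoid sends such a $\zeta$ to a linear idempotent, i.e.\ a projection. Commutativity of $M$ ensures the images of $\alpha,\beta$ commute in $\VECTORSPACES_k$. Thus, writing $\nabla(\alpha),\nabla(\beta)$ simply as $\alpha,\beta$, we reduce to the statement: for commuting projections $\alpha,\beta$ on $V$, $\im(\beta\alpha)=\im\,\alpha\cap\im\,\beta$.

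For the inclusion $\im(\beta\alpha)\subseteq\im\,\alpha\cap\im\,\beta$, the containment $\im(\beta\alpha)\subseteq\im\,\beta$ is immediate, and commutativity gives $\beta\alpha=\alpha\beta$, whence $\im(\beta\alpha)=\im(\alpha\beta)\subseteq\im\,\alpha$. For the reverse inclusion, let $v\in\im\,\alpha\cap\im\,\beta$; since $\alpha$ is a projection and $v\in\im\,\alpha$, we have $\alpha(v)=v$, and likewise $\beta(v)=v$. Therefore $\beta\alpha(v)=\beta(v)=v$, exhibiting $v$ as an element of $\im(\beta\alpha)$.

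There is no real obstacle: the substance of the argument is that for commuting projections, the composite is itself a projection onto the intersection of the images. The only minor point is making sure the reduction to the operator case is legitimate, which follows by simply replacing $M$ by $\nabla(M)$ regarded as a submonoid of $\Hom(V,V)$ where $V=\nabla(o)$ for the unique object $o$ of $M$ — this preserves both commutativity and idempotency.
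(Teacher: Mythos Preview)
Your proof is correct and follows essentially the same approach as the paper's own proof: reduce to commuting projections on a fixed vector space, then verify both inclusions directly. The only cosmetic difference is that the paper phrases the reverse inclusion at the level of subspaces, observing $\alpha\beta\bigl((\im\,\alpha)\cap(\im\,\beta)\bigr)=(\im\,\alpha)\cap(\im\,\beta)$, while you work pointwise with a vector $v$; the underlying idea is identical.
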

\begin{proof}
  Take $\nabla$ to be inclusion without loss of generality.
  The equality $\im\,\alpha\beta=(\im\,\alpha)\cap(\im\,\beta)$ follows because: the left side is contained in the right side because $\im\,\alpha\beta\subset\im\,\alpha$ and $\im\,\beta\alpha\subset\im\,\beta$; and the right side is contained in the left side because $\alpha\beta(\im\,\alpha)\cap(\im\,\beta))=\alpha(\im\,\alpha)\cap(\im\,\beta)=(\im\,\alpha)\cap(\im\,\beta)$.  
\end{proof}

We then show that every representation of a commutative, idempotent monoid lifts to a category of inner product spaces and partial isometries.

\begin{lem}
	\label{lem:unitary.projections}
	Every representation of a commutative idempotent monoid
	$$\nabla:M\ra\VECTORSPACES_{\C}$$
	lifts to a monoid of unitary projections on a fixed Hermitian vector space.  
\end{lem}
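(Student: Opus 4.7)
The plan is to construct a Hermitian inner product on the underlying vector space $V=\nabla(\star)$ — where $\star$ denotes the unique object of $M$ — such that every operator $\nabla(m)$ becomes an orthogonal projection. Orthogonal projections on a Hermitian space are exactly the unitary projections, so this will furnish the desired lift.

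First I would pass to the unital subalgebra $A\subset\mathrm{End}_\C(V)$ generated by $\{\nabla(m):m\in M\}$. Commutativity of $M$ forces $A$ to be commutative, and $A$ is finite-dimensional because it embeds in the finite-dimensional algebra $\mathrm{End}_\C(V)$. Each generator is idempotent.

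Next I would argue that $A\cong\C^N$ for some $N$. For any finite subfamily $\nabla(m_1),\ldots,\nabla(m_k)$, the $2^k$ products $\prod_{i=1}^k\epsilon_i$ with $\epsilon_i\in\{\nabla(m_i),\,\id_V-\nabla(m_i)\}$ form a pairwise orthogonal system of idempotents summing to $\id_V$, so the unital subalgebra they span is isomorphic to $\C^n$ for some $n\leqslant 2^k$. Because $A$ is finite-dimensional, the ascending chain of such finitely generated subalgebras stabilizes at some $A\cong\C^N$. Let $f_1,\ldots,f_N\in A$ be the resulting primitive orthogonal idempotents. They determine an internal direct sum decomposition $V=\bigoplus_{i=1}^{N}\im f_i$, and each $\nabla(m)\in A$, being idempotent in $\C^N$, has the form $\nabla(m)=\sum_{i\in S_m}f_i$ for some subset $S_m\subset\{1,\ldots,N\}$.

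Finally I would choose a basis for each $\im f_i$ and declare the union orthonormal; this defines a Hermitian inner product on $V$ for which the subspaces $\im f_i$ are mutually orthogonal. With respect to this inner product, each $\nabla(m)$ is the orthogonal projection onto $\bigoplus_{i\in S_m}\im f_i$ along its orthogonal complement $\bigoplus_{i\notin S_m}\im f_i$, i.e., a unitary projection, as required. The main obstacle is establishing the decomposition $A\cong\C^N$; once joint diagonalization of the commuting idempotents is in hand, the remaining inner product construction and verification are immediate.
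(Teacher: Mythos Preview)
Your proof is correct. Both you and the paper arrive at the same simultaneous decomposition of $V$ into pieces on which every $\nabla(m)$ acts as either the identity or zero, and then declare those pieces mutually orthogonal; the difference lies in how that decomposition is produced. The paper first enlarges $M$ to include the complementary idempotents $\id_V-\gamma$, considers the poset of decompositions of $V$ into direct sums of images of $M$-morphisms, and picks a maximal one; maximality is then used to see that refining by any $\nabla(\zeta)$ does nothing, whence $\im\,\nabla(\zeta)\perp\kernel\,\nabla(\zeta)$. You instead pass to the commutative subalgebra $A\subset\mathrm{End}_\C(V)$ generated by the $\nabla(m)$, observe that an algebra generated by commuting idempotents is a finite product of copies of $\C$, and read off the decomposition from the primitive idempotents. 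Your route is more algebraic and makes it immediate that each $\nabla(m)$ is a $0$--$1$ combination of the primitive pieces; the paper's route is more elementary in that it avoids any structure theory of algebras, arguing purely with refinements of decompositions and finite-dimensionality. Either way the endgame---choosing an inner product making the pieces orthogonal---is identical.
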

\begin{proof}
	Take $\nabla$ to be inclusion without loss of generality, so that $M$ is a monoid of operators on a fixed vector space denoted by $\mathfrak{v}$.  

	Let $\alpha,\beta,\gamma$ denote $M$-morphisms.
	
	Further assume all operators of the form $\id_{\mathfrak{v}}-\gamma$ are $M$-morphisms because otherwise we can replace $M$ with the submonoid of $\VECTORSPACES_{\C}$ generated by all operators of the form $\gamma$ or $\id_{\mathfrak{v}}-\gamma$; such a monoid is commutative because it has mutually commuting generators: $(\id_{\mathfrak{v}}-\alpha)\beta=\beta-\alpha\beta=\beta-\beta\alpha=\beta(\id_{\mathfrak{v}}-\alpha)$, $(\id_{\mathfrak{v}}-\beta)(\id_{\mathfrak{v}}-\alpha)=\id_{\mathfrak{v}}-\alpha-\beta-\beta\alpha=\id_{\mathfrak{v}}-\beta-\alpha-\alpha\beta=(\id_{\mathfrak{v}}-\alpha)(\id_{\mathfrak{v}}-\beta)$ for all $\alpha,\beta$; such a commutative monoid is  idempotent because it has idempotent generators: $\gamma^2=\gamma$ by assumption and $(\id_{\mathfrak{v}}-\gamma)^2=\id_{\mathfrak{v}}-2\gamma+\gamma=\id_{\mathfrak{v}}-\gamma$ for all $\gamma$.  
	
	Let $\mathfrak{D}$ be the poset of all decompositions of $\mathfrak{v}$ into direct sums of images of $M$-morphisms ordered by refinement of decompositions.
	The poset $\mathfrak{D}$ is non-empty because it contains the minimum decomposition $\mathfrak{v}=\im\,\id_{\mathfrak{v}}$.  
	Every chain in $\mathfrak{D}$ is finite by $\mathfrak{v}$ finite-dimensional.
	Thus $\mathfrak{D}$ has a maximal such decomposition of the form $\mathfrak{v}=\oplus_{\phi\in\mathcal{I}}\im\,\phi$.
	Equip $\mathfrak{v}$ with a Hermitian inner product so that $(\im\,\alpha)\perp(\im\,\beta)$ for distinct $\alpha,\beta\in\mathcal{I}$.
	For each $M$-morphism $\zeta$, maximality implies the two decompositions
	$$\bigoplus_{\phi\in\mathcal{I}}\im\,\zeta\phi\oplus\im\,(\id_{\mathfrak{v}}-\zeta)\phi=\bigoplus_{\phi\in\mathcal{I}}(\im\,\phi)$$
	of $\mathfrak{v}$ are the same up to reordering of the summands so that $\im\,\zeta\perp\im\,(\id_{\mathfrak{v}}-\zeta)=\kernel\,\zeta$ and hence $\zeta$ is unitary.
\end{proof}

\subsection{Semilattices}
This section relates representations of unital semilattices in the sense of posets with maxima and all finite infima with representations of unital semilattices in the sense of commutative idempotent monoids.
For each representation 
$$\nabla:M\ra\VECTORSPACES_k,$$
of a commutative idempotent monoid, let $\im\,\nabla$ denote the representation $\Omega_M\ra\VECTORSPACES_k$ of the associated poset $\Omega_M$, whose objects are the morphisms of $M$ with $\alpha\leqslant_{\Omega_P}\beta$ if $\alpha=\alpha\beta$, sending each $\Omega_M$-object $\alpha$ to $\im\,\alpha$ and each relation $\alpha\leqslant_{\Omega_P}\beta$ to an inclusion $\im\,\alpha\ira\im\,\beta$ [Lemma \ref{lem:commuting.projections}]. 
The following lemma chararacterizes when a suitable representation of such a poset arises as the image of the representation of a commutative, idempotent monoid.

\begin{lem}
  \label{lem:realizability}
  Fix a finite, complete poset $P$ with a maximum and a representation
  $$\nabla:P\ra\VECTORSPACES_k$$
  that sends every morphism to an inclusion.
  The following are equivalent:
  \begin{enumerate}
    \item For all $P$-objects $\mathfrak{b},\mathfrak{c}$, $\sum_{\mathfrak{a}\leqslant_P\mathfrak{b}}\mu_P(\mathfrak{a})(\dim\mathfrak{a}-\dim(\mathfrak{a}\cap\mathfrak{c}))\geqslant 0$.  
    \item There exists a representation $\Pi:\Sigma_P\ra\VECTORSPACES_k$ such that $\im\,\Pi=\nabla$, where $\Sigma_P$ is the commutative, idempotent monoid whose morphisms are the objects of $P$ and whose multiplication is defined by binary infima.    
  \end{enumerate}
\end{lem}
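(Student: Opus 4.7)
The plan is to prove both implications by extracting, in the presence of a realization, a canonical direct-sum decomposition $V = \bigoplus_{u_0 \in P} W_{u_0}$ of $V := \nabla(\max P)$ such that $\nabla(\mathfrak{a}) = \bigoplus_{u_0 \leqslant \mathfrak{a}} W_{u_0}$, and reading condition (1) as the non-negativity of the dimensions $\dim W_{u_0}$ such a decomposition must carry.

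First I would handle (2)$\implies$(1). Given $\Pi$ with $\im\,\Pi = \nabla$, the operators $\Pi(\mathfrak{a})$ form a family of pairwise-commuting idempotents on $V$ by Lemma \ref{lem:commuting.projections}, hence are simultaneously diagonalizable: there exists a basis of $V$ in which each $\Pi(\mathfrak{a})$ is a coordinate projection. The support of each basis vector $e$ — the set $\{\mathfrak{a} \in P : \Pi(\mathfrak{a})(e) = e\}$ — is upward-closed by monotonicity, and closed under binary meets because $\Pi(\mathfrak{a})\Pi(\mathfrak{b}) = \Pi(\mathfrak{a} \wedge \mathfrak{b})$; since $P$ has all binary meets, each such support is thus a principal filter $[u_0, \infty)$. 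Grouping basis vectors by type produces the decomposition above, with $d_{u_0} := \dim W_{u_0} \geqslant 0$. Expanding $\dim \nabla(\mathfrak{a}) = \sum_{u_0 \leqslant \mathfrak{a}} d_{u_0}$ and, via meet-preservation, $\dim(\nabla(\mathfrak{a}) \cap \nabla(\mathfrak{c})) = \sum_{u_0 \leqslant \mathfrak{a} \wedge \mathfrak{c}} d_{u_0}$, the Möbius sum in (1) collapses by a standard inversion to a non-negative multiple of some $d_{u_0}$.

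For (1)$\implies$(2), I would reverse-engineer the would-be summand dimensions $d_{u_0}$ from $\nabla$ by Möbius-inverting $\dim \nabla(-)$, so that $\dim \nabla(\mathfrak{a}) = \sum_{u_0 \leqslant \mathfrak{a}} d_{u_0}$. Specializing condition (1) to $\mathfrak{c} = \min P$, where $\dim(\mathfrak{a} \cap \mathfrak{c})$ is constant in $\mathfrak{a}$, forces $d_{u_0} \geqslant 0$ for every $u_0 \in P$. Processing $P$ from the bottom up, I would inductively choose subspaces $W_{u_0} \subseteq \nabla(u_0)$, each of dimension $d_{u_0}$ and complementary to $\sum_{u_0' <_P u_0} W_{u_0'}$ inside $\nabla(u_0)$, invoking (1) at further $(\mathfrak{b}, \mathfrak{c})$-pairs to guarantee the required internal compatibility. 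Defining $\Pi(\mathfrak{a})$ as the projection onto $\bigoplus_{u_0 \leqslant \mathfrak{a}} W_{u_0}$ along the complementary summands then yields a representation of $\Sigma_P$: the identity $\Pi(\mathfrak{a})\Pi(\mathfrak{b}) = \Pi(\mathfrak{a} \wedge \mathfrak{b})$ follows by a direct computation on the decomposition, and $\im\,\Pi = \nabla$ by the dimension identities above.

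The principal obstacle lies in the inductive step of (1)$\implies$(2): when $u_0', u_0'' \in P$ are incomparable, there is no a priori reason for the subspaces $W_{u_0'}$ and $W_{u_0''}$ already chosen to be linearly independent inside $V$, and a spurious intersection would derail the decomposition. Independence is equivalent to $\nabla$ preserving the meet $u_0' \wedge u_0''$ at the level of subspaces, so the delicate technical work is to verify that condition (1), applied at the relevant $(\mathfrak{b}, \mathfrak{c})$-pairs, genuinely enforces this distributivity and thereby allows the induction to propagate through every pair of incomparable elements of $P$.
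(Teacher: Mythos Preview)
Your $(2)\Rightarrow(1)$ via simultaneous diagonalization of the commuting idempotents $\Pi(\mathfrak{a})$ is correct and arguably cleaner than the paper's route, which instead enlarges $P$ to its closure $P^+$ under finite sums of subspaces, extends $\Pi$ to $\Pi^+$, and reads off the inequality from the dimension identity of Lemma~\ref{lem:commute} applied to $\mathfrak{b}$ versus $\sum_{\mathfrak{a}<\mathfrak{b}}\mathfrak{a}$. Your argument even computes the M\"obius sum exactly: it equals $d_{\mathfrak{b}}$ when $\mathfrak{b}\not\leqslant_P\mathfrak{c}$ and $0$ otherwise.

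For $(1)\Rightarrow(2)$, however, the paper does something genuinely different that sidesteps exactly the obstacle you flag. It does \emph{not} build a single decomposition $V=\bigoplus_{u_0}W_{u_0}$. Instead, for each $\mathfrak{c}$ \emph{separately} it chooses $\phi_{\mathfrak{c}}(\mathfrak{b})\geqslant 0$ independent vectors $\Delta_{\mathfrak{b},\mathfrak{c}}$ in the stratum $\mathfrak{b}\setminus(\mathfrak{c}\cup\sum_{\mathfrak{a}<_P\mathfrak{b}}\mathfrak{a})$ and declares $\Pi(\mathfrak{c})$ to be projection onto $\mathfrak{c}$ with kernel $\langle\coprod_{\mathfrak{b}}\Delta_{\mathfrak{b},\mathfrak{c}}\rangle$. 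No compatibility between different $\mathfrak{c}$'s is arranged; commutativity $\Pi(\mathfrak{b})\Pi(\mathfrak{c})=\Pi(\mathfrak{c})\Pi(\mathfrak{b})$ is verified \emph{after the fact} via the purely numerical criterion Lemma~\ref{lem:commute}(3), which follows by counting the $\Delta$'s. Your route, by contrast, attempts to construct directly the global simultaneous eigendecomposition you extracted in the other direction. But the existence of such a decomposition already forces $\nabla(\mathfrak{a})\cap\nabla(\mathfrak{b})=\nabla(\mathfrak{a}\wedge\mathfrak{b})$ for all pairs and is essentially equivalent to (2); you correctly identify the independence of $W_{u_0'},W_{u_0''}$ at incomparable indices as the crux, yet the proposal gives no mechanism by which condition~(1) enforces it. The paper's point is that one need not: pairwise commutativity, not a simultaneous basis, is all that (2) asks for.
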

\begin{proof}
  Take $\nabla$ to be inclusion without loss of generality.
  Let $\mathfrak{a},\mathfrak{b},\mathfrak{c}$ denote $P$-objects.
  Let 
  $$\phi_{\mathfrak{c}}(\mathfrak{a})=(\dim\,\mathfrak{a}-\dim\,\mathfrak{a}\cap\mathfrak{c})-\dim\,\sum_{\mathfrak{b}<\mathfrak{a}}\mathfrak{b}+\dim\,\sum_{\mathfrak{b}<\mathfrak{a}}\mathfrak{b}\cap\mathfrak{c}$$
	
  Note $\dim\,\mathfrak{b}-\dim\,\mathfrak{b}\cap\mathfrak{c}=\sum_{\mathfrak{a}\leqslant_P\mathfrak{b}}\phi_{\mathfrak{c}}(\mathfrak{a})$.
  M\"{o}bius Inversion implies 
  $$\phi_{\mathfrak{c}}(\mathfrak{b})=\sum_{\mathfrak{a}\leqslant_P\mathfrak{b}}\mu_P(\mathfrak{a})(\dim\mathfrak{a}-\dim(\mathfrak{a}\cap\mathfrak{c})).$$ 
  \vspace{.1in}\\
  \textit{(1)$\implies$(2):}
  Assume (1).  
  For each $\mathfrak{b}$, there exists a choice $\Delta_{\mathfrak{b},\mathfrak{c}}$ of $\phi_{\mathfrak{c}}(\mathfrak{b})$ linearly independent vectors in the set-theoretic difference $\mathfrak{b}-\mathfrak{c}-\sum_{\mathfrak{a}<_P\mathfrak{b}}\mathfrak{a}$ by our assumption (1) that $\phi_{\mathfrak{c}}(\mathfrak{b})\geqslant 0$.
  For each $\mathfrak{c}$, let $\Pi(\mathfrak{c})$ be the unique projection operator with image $\mathfrak{c}$ and kernel spanned by the linearly independent set $\amalg_{\mathfrak{b}}\Delta_{\mathfrak{b},\mathfrak{c}}$.  
  For all $\mathfrak{b},\mathfrak{c}$,
  \begin{align*}
	     \dim\,(\im\,\Pi(\mathfrak{b})\cap\kernel\,\Pi(\mathfrak{c}))
			 &= \dim\,(\mathfrak{b}\cap\langle \amalg_{\mathfrak{a}}\Delta_{\mathfrak{a},\mathfrak{c}}\rangle)\\
			 &= \#(\amalg_{\mathfrak{a}\leqslant\mathfrak{b}}\Delta_{\mathfrak{a},\mathfrak{c}})\\
			 &= \sum_{\mathfrak{a}\leqslant\mathfrak{b}}\phi_{\mathfrak{c}}(\mathfrak{a})\\
			 &= \dim\,\mathfrak{b}-\dim\,\mathfrak{b}\cap\mathfrak{c}\\
			 &= \dim\,\im\,\Pi(\mathfrak{b})-\dim\,\im\,\Pi(\mathfrak{b})\cap\im\,\Pi(\mathfrak{c})
	\end{align*}
	and hence $\Pi(\mathfrak{b})\Pi(\mathfrak{c})=\Pi(\mathfrak{c})\Pi(\mathfrak{b})$ [Lemma \ref{lem:commute}].
	Thus (2).
	\vspace{.1in}\\
	\textit{(2)$\implies$(1):}
	Conversely, suppose (2).
	The smallest poset $P^+$ containing $P$ as a sub-poset and all finite sums of $P$-objects is also finitely complete.
	And $\Pi$ extends to a representation $\Pi^+$ of $\Sigma_{P^+}$ by recursively defining $\Pi^+(\mathfrak{d}+\mathfrak{e})=\Pi^+(\mathfrak{d})+\Pi^+(\mathfrak{e})-\Pi^+(\mathfrak{d}\cap\mathfrak{e})$ and noting that linear combinations of commuting operators commute.
	For all $P^+$-objects $\mathfrak{d},\mathfrak{e}$, 
	\begin{align*}
		  \dim\,\mathfrak{d}
		&=\dim\im\,\Pi^+({\mathfrak{d}})\\
		&=\dim(\im\,\Pi^+({\mathfrak{d}})\cap\im\,\Pi^+({\mathfrak{e}}))+\dim(\im\,\Pi^+({\mathfrak{d}})\cap\kernel\,\Pi^+({\mathfrak{e}})) && [\mathrm{Lemma}\;\ref{lem:commute}]\\
		&=\dim\,\mathfrak{d}\cap\mathfrak{e}+\dim(\mathfrak{d}\cap\kernel\,\Pi^+({\mathfrak{e}}))
	\end{align*}
	and hence for all $\mathfrak{b},\mathfrak{c}$, let $\mathfrak{d}=\sum_{\mathfrak{a}<\mathfrak{b}}\mathfrak{a}$ and note
	\begin{align*}
		0&\geqslant \dim(\mathfrak{b}\cap\kernel\,\Pi(\mathfrak{c}))-\dim\,\mathfrak{d}\cap\kernel\,\Pi(\mathfrak{c})\\
		&=(\dim\,\mathfrak{b}-\dim\,\mathfrak{b}\cap\mathfrak{c})-\dim\,\mathfrak{d}+\dim\,\mathfrak{d}\cap\mathfrak{c}	
		=\phi_{\mathfrak{c}}(\mathfrak{b})
	  =\sum_{\mathfrak{a}\leqslant_P\mathfrak{b}}\mu_P(\mathfrak{a})(\dim\mathfrak{a}-\dim(\mathfrak{a}\cap\mathfrak{c}))
  \end{align*}
	and hence (1).  
\end{proof}

\subsection{Inverse categories}
Representations of inverse categories have special properties.
For example, complex representations of inverse categories are partial isometries up to isomorphisms in a certain sense [Propoition \ref{prop:inverse-category-representation}]. 
We conclude this section by recalling and specializing an extension of Maschke's Theorem to the inverse setting \cite[Theorem 4.1]{linckelmann2013inverse}. 

\begin{lem}
	\label{lem:kernel.decomposition}
	For morphisms $\alpha,\beta$ in an inverse subcategory of $\VECTORSPACES_k$ with $\cod\,\alpha=\cod\,\beta$,
	$$\beta^{-1}(\im\,\alpha)=\kernel\,\beta+\im\,\beta^\dagger\alpha.$$
\end{lem}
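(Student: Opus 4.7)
The plan is to prove the stated equality by showing both inclusions separately, using only the defining identities of pseudo-inverses together with the commutativity of idempotent endomorphisms in an inverse category [Lemma \ref{lem:inverse.characterization}].

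For the inclusion $\kernel\,\beta + \im\,\beta^\dagger\alpha \subseteq \beta^{-1}(\im\,\alpha)$, the summand $\kernel\,\beta$ is obviously sent into $\im\,\alpha$ since it is sent to $\mathbf{0}$. For the summand $\im\,\beta^\dagger\alpha$, pick $x \in \dom\,\alpha$ and compute $\beta(\beta^\dagger\alpha(x)) = \beta\beta^\dagger\alpha(x)$. The key point is that $\beta\beta^\dagger$ and $\alpha\alpha^\dagger$ are commuting idempotent endomorphisms of $\cod\,\alpha = \cod\,\beta$, so that
$$\beta\beta^\dagger\alpha(x) = (\beta\beta^\dagger)(\alpha\alpha^\dagger)\alpha(x) = (\alpha\alpha^\dagger)(\beta\beta^\dagger)\alpha(x) \in \im\,\alpha.$$

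For the reverse inclusion $\beta^{-1}(\im\,\alpha) \subseteq \kernel\,\beta + \im\,\beta^\dagger\alpha$, I would use the standard splitting $v = (v - \beta^\dagger\beta v) + \beta^\dagger\beta v$. The first summand lies in $\kernel\,\beta$ by the pseudo-inverse identity $\beta\beta^\dagger\beta = \beta$. For the second summand, choose $x \in \dom\,\alpha$ with $\beta(v) = \alpha(x)$ (which exists by the hypothesis $v \in \beta^{-1}(\im\,\alpha)$), giving $\beta^\dagger\beta(v) = \beta^\dagger\alpha(x) \in \im\,\beta^\dagger\alpha$.

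I do not expect a serious obstacle. The only subtle step is the manipulation $\beta\beta^\dagger\alpha(x) = \alpha\alpha^\dagger\beta\beta^\dagger\alpha(x)$, which depends on the commutativity of idempotents (already established for inverse categories) rather than any finer linear-algebraic structure; this is exactly the role played by the hypothesis that we are working inside an inverse subcategory of $\VECTORSPACES_k$ rather than with arbitrary linear maps.
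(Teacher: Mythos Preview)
Your proof is correct and is essentially identical to the paper's own argument: both inclusions are handled the same way, using the splitting $v=(v-\beta^\dagger\beta v)+\beta^\dagger\beta v$ for one direction and the commutativity of the idempotents $\beta\beta^\dagger$ and $\alpha\alpha^\dagger$ [Lemma~\ref{lem:inverse.characterization}] for the other. The only cosmetic difference is that the paper phrases $(v-\beta^\dagger\beta v)\in\kernel\,\beta$ via $\kernel\,\beta=\kernel\,\beta^\dagger\beta$ [Lemma~\ref{lem:pseudo-inverse.maps}] rather than via $\beta\beta^\dagger\beta=\beta$ directly.
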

\begin{proof}
	Take $v\in\beta^{-1}(\im\,\alpha)$.
	There exists $a\in\dom(\alpha)$ such that $\beta(v)=\alpha(a)$
	Then $v=(\id-\beta^{\dagger}\beta)(v)+\beta^{\dagger}\beta(v)=(\id-\beta^{\dagger}\beta)(v)+\beta^{\dagger}\alpha(a)$.
	Note that $(\id-\beta^{\dagger}\beta)(v)\in\kernel\,\beta^\dagger\beta=\kernel\,\beta$ [Lemma \ref{lem:pseudo-inverse.maps}] and $\beta^\dagger\alpha(a)\in\im\,\beta^\dagger\alpha$.  
	Thus the left side is contained in the right side.  

	Note that $\beta(\kernel\,\beta)={\bf 0}\subset\im\,\alpha$.  
	And $\im\,\beta(\beta^\dagger\alpha)=\im\,(\beta\beta^\dagger)(\alpha\alpha^\dagger)=(\alpha\alpha^\dagger)(\beta\beta^\dagger)\in\im\,\alpha$ [Lemma \ref{lem:pseudo-inverse.maps}, \ref{lem:inverse.characterization}].
	Thus the right side is contained in the left side.
\end{proof} 

Every adjoint-closed category of inner product spaces and partial isometries is inverse.  
However, not every complex representation of a small category that factors through an inverse category lifts to a category of inner product spaces and partial isometries.

\begin{eg}
	\label{eg:non-isometric.groupoid}
	Consider the following faithful diagram $\nabla$:
	\begin{equation*}
			\begin{tikzcd}
				& \C\ar[out=480,in=600,loop,swap,"{z\mapsto 2z}"]
			\end{tikzcd}
	\end{equation*}
	Then $\nabla$ factors through a groupoid and hence an inverse category, but does not lift to a category of inner product spaces and partial isometries.  
\end{eg}

The following proposition shows the next best thing: that every complex representation of a small inverse category lifts to a category of inner product spaces and partial isometries up to isomorphism in the following sense.
Fix a Hermitian vector space $\mathfrak{c}$ with Hermitian inner product $q$.  
For each vector subspace $\mathfrak{b}\leqslant\mathfrak{c}$, $\mathfrak{b}^{\perp_q}$ will denote the orthgonal complement of $\mathfrak{b}$ in $\mathfrak{c}$ with respect to $q$.  
A \textit{$q$-unitary projection} is a projection $\phi:\mathfrak{c}\ra\mathfrak{c}$ with $\kernel\,\phi\perp\im\,\phi$ with respect to $q$; $q$-unitary projections on $\mathfrak{c}$ are exactly the idempotent partial isometries on $\mathfrak{c}$.

\begin{prop}
	\label{prop:inverse-category-representation}
	Consider an inverse category $\smallcat{1}$ and functor
	$$\nabla:\smallcat{1}\ra\VECTORSPACES_\C.$$
	There exists a choice of Hermitian inner product $q(o)$ on $\nabla(o)$ for each $\smallcat{1}$-object $o$ such that for each $\smallcat{1}$-morphism $\zeta:x\ra y$, $\nabla(\zeta\zeta^\dagger)$ is $q(x)$-unitary projection onto $(\kernel\,\nabla(\zeta))^{\perp_{q(x)}}$ and $\nabla(\zeta^\dagger\zeta)$ is $q(y)$-unitary projection onto $\im\,\nabla(\zeta)$.
\end{prop}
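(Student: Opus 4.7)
The plan is to reduce the entire proposition to a single application of Lemma~\ref{lem:unitary.projections} at each object, since the statement only concerns idempotent endomorphisms and therefore requires no coordination of inner products across different objects.

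First, for each $\smallcat{1}$-object $o$, I would consider the set $E(o)$ of idempotent endomorphisms of $o$ in $\smallcat{1}$. This contains $\id_o$ and is closed under composition: by Lemma~\ref{lem:inverse.characterization} any two idempotents in $\smallcat{1}(o,o)$ commute, so their product $e_1e_2$ again satisfies $(e_1e_2)^2=e_1^2e_2^2=e_1e_2$. Hence $E(o)$ is a commutative idempotent submonoid of $\smallcat{1}(o,o)$, and the restriction $\nabla|_{E(o)}$ is a representation of a commutative idempotent monoid. Applying Lemma~\ref{lem:unitary.projections} then furnishes a Hermitian inner product $q(o)$ on $\nabla(o)$ for which $\nabla(e)$ is a $q(o)$-unitary projection for every $e\in E(o)$.

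Second, for any $\smallcat{1}$-morphism $\zeta:x\to y$, the elements $\zeta^\dagger\zeta$ and $\zeta\zeta^\dagger$ are idempotent endomorphisms of $x$ and $y$ respectively (directly from the pseudo-inverse equations), hence lie in $E(x)$ and $E(y)$. By construction, $\nabla(\zeta^\dagger\zeta)$ is already a $q(x)$-unitary projection on $\nabla(x)$ and $\nabla(\zeta\zeta^\dagger)$ is a $q(y)$-unitary projection on $\nabla(y)$. To identify the targets of these projections I would invoke Lemma~\ref{lem:pseudo-inverse.maps}: one has $\kernel\,\nabla(\zeta^\dagger\zeta)=\kernel\,\nabla(\zeta)$, forcing $\nabla(\zeta^\dagger\zeta)$ to be the unique $q(x)$-unitary projection onto $(\kernel\,\nabla(\zeta))^{\perp_{q(x)}}$, and dually $\im\,\nabla(\zeta\zeta^\dagger)=\im\,\nabla(\zeta)$, making $\nabla(\zeta\zeta^\dagger)$ the $q(y)$-unitary projection onto $\im\,\nabla(\zeta)$.

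I do not expect any substantive obstacle. All the analytic content lives in Lemma~\ref{lem:unitary.projections}, which applies verbatim once the commutative idempotent submonoid $E(o)$ has been identified; Lemmas~\ref{lem:inverse.characterization} and~\ref{lem:pseudo-inverse.maps} together supply precisely the small facts needed to verify its hypothesis and to translate its conclusion back into kernel-and-image language. The only delicate point is notational, namely keeping straight which of $\zeta^\dagger\zeta$ and $\zeta\zeta^\dagger$ acts on the domain versus the codomain of $\zeta$, but the two sides of the statement are handled by identical arguments with $x$ and $y$ interchanged.
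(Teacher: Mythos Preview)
Your proposal is correct and follows essentially the same route as the paper: isolate at each object the commutative idempotent monoid of idempotent endomorphisms (via Lemma~\ref{lem:inverse.characterization}), apply Lemma~\ref{lem:unitary.projections} to obtain the inner product, and then read off the images and kernels of $\nabla(\zeta^\dagger\zeta)$, $\nabla(\zeta\zeta^\dagger)$ from Lemma~\ref{lem:pseudo-inverse.maps}. The only cosmetic difference is that the paper first replaces $\nabla$ by an inclusion via Lemma~\ref{lem:wlog} before naming the idempotent submonoid, whereas you work directly with $\nabla|_{E(o)}$; neither choice affects the argument.
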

\begin{proof}
	Take $\nabla$ to be inclusion without loss of generality [Lemma \ref{lem:wlog}].  
	
	Consider an $\smallcat{1}$-object $\mathfrak{v}$. 
	Let $\Lambda_{\mathfrak{v}}$ denote the submonoid of the inverse endomorphism monoid of all linear operators on $\mathfrak{v}$ consisting of the projections.
	Then $\Lambda_{\mathfrak{v}}$ is a commutative, idempotent monoid because it is an idempotent inverse monoid [Lemma \ref{lem:inverse.characterization}].  
	Hence there exists an inner product $q(\mathfrak{v})$ on $\mathfrak{v}$ such that every linear operator on $\mathfrak{v}$ of the form $\psi^\dagger\psi$ is a $q(\mathfrak{v})$-unitary projection [Lemma \ref{lem:unitary.projections}].  
	
	Consider an $\smallcat{1}$-morphism $\psi$.
	Then $\im\,\psi=\im\,\psi\psi^{\dagger}$ [Lemma \ref{lem:pseudo-inverse.maps}].
	Then $\psi\psi^{\dagger}$ is $q(\cod\,\psi)$-unitary projection onto $\im\,\psi$.
	Similarly, $\psi^{\dagger}\psi$ is the unique $q(\dom\,\psi)$-unitary projection onto $\im\,\psi^\dagger$.
	Moreover, $\kernel\,\psi=\kernel\,\psi^{\dagger}\psi$ [Lemma \ref{lem:pseudo-inverse.maps}] is orthogonal to $\im\,\psi^{\dagger}=\im\,\psi^\dagger\psi$[Lemma \ref{lem:pseudo-inverse.maps}] with respect to $q(\dom\,\psi)$ by $\psi^\dagger\psi$ $q(\dom\,\psi)$-unitary.   
	Thus $\im\,\psi^{\dagger}=(\kernel\,\psi)^{\perp_{q(\dom\,\psi)}}$.  
\end{proof}

Every faithful representation of an inverse monoid as partial isometries extends to a faithful representation of an inverse monoid as partial isometries whose unitary projections form Boolean lattices \cite[Theorem 2.5]{bracci2007representations}.
The following lemma is an analogue, where inverse monoids are generalized to inverse categories and no metric structure is assumed.

\begin{lem}
	\label{lem:completion}
	Consider an inverse category $\smallcat{1}$ that is a subcategory 
	$$\smallcat{1}\subset\VECTORSPACES_k.$$
	Let $\smallcat{2}$ be the minimal subcategory of $\VECTORSPACES_k$ containing $\smallcat{1}$ and, for each $\smallcat{1}$-objects $V$, all idempotent $k$-linear combinations of idempotent operators on $V$ in $\smallcat{1}$.
	Then $\smallcat{2}$ is inverse.
\end{lem}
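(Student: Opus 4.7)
The plan is to reduce to Lemma \ref{lem:wlog} by exhibiting an inverse category $\cat{3}$ with a functor to $\smallcat{2}$ that is surjective on objects and morphisms. For each $\smallcat{1}$-object $V$, let $E_V$ denote the idempotents of $\smallcat{1}(V,V)$ -- commutative by Lemma \ref{lem:inverse.characterization} -- and let $A_V$ be the commutative $k$-subalgebra of $\mathrm{End}_k(V)$ it generates, with $N_V\subset A_V$ the Boolean algebra of idempotents in $A_V$. By construction $\smallcat{2}$ is generated by $\smallcat{1}$ together with $\bigcup_V N_V$.

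The main technical tool is a push-through identity: for each $\smallcat{1}$-morphism $\zeta:V\to W$ and $p\in A_V$, one has $\zeta p=(\zeta p\zeta^\dagger)\zeta$ with $\zeta p\zeta^\dagger\in A_W$ (idempotent when $p$ is), and symmetrically $q\zeta=\zeta(\zeta^\dagger q\zeta)$ for $q\in A_W$. Both identities follow from $\zeta\zeta^\dagger\zeta=\zeta$ together with the commutativity of $\zeta\zeta^\dagger$ and $\zeta^\dagger\zeta$ with $A_W$ and $A_V$ respectively. Iterating these rewrites reduces every $\smallcat{2}$-morphism $V\to W$ to the form $\zeta p$ with $\zeta\in\smallcat{1}(V,W)$ and $p\in N_V$; replacing $p$ by $p(\zeta^\dagger\zeta)$ further arranges $p\leqslant\zeta^\dagger\zeta$ in the Boolean order, without changing $\zeta p$ since $\zeta\cdot\zeta^\dagger\zeta=\zeta$.

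Using this normal form I would define $\cat{3}$ to have the same objects as $\smallcat{1}$ and morphisms $\cat{3}(V,W)$ the pairs $(\zeta,p)$ with $\zeta\in\smallcat{1}(V,W)$ and $p\in N_V$ satisfying $p\leqslant\zeta^\dagger\zeta$, composed by $(\eta,q)\circ(\zeta,p)=(\eta\zeta,(\zeta^\dagger q\zeta)p)$ with identities $(\id_V,\id_V)$. The push-through identity ensures composition lands in admissible pairs, and associativity reduces to the observation that $q\mapsto\zeta^\dagger q\zeta$ defines a nonunital $k$-algebra homomorphism $A_W\to A_V$. The evaluation $(\zeta,p)\mapsto\zeta p$ then gives a functor $F:\cat{3}\to\smallcat{2}$ surjective on objects and on morphisms.

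It remains to verify $\cat{3}$ is inverse via Lemma \ref{lem:inverse.characterization}. A short calculation shows $(\zeta^\dagger,\zeta p\zeta^\dagger)$ is pseudo-inverse to $(\zeta,p)$; the pair is admissible because $\zeta p\zeta^\dagger\leqslant\zeta\zeta^\dagger$, and the axioms reduce to $(\zeta^\dagger\zeta)p(\zeta^\dagger\zeta)=p$, which holds by $p\leqslant\zeta^\dagger\zeta$ and commutativity inside $A_V$. The idempotents in $\cat{3}(V,V)$ are exactly the pairs $(e,p)$ with $e\in E_V$ and $p\leqslant e$ in $N_V$; any two such commute because the composition formula then reduces entirely to computations inside the commutative algebra $A_V$. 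Lemma \ref{lem:wlog} transports inverseness from $\cat{3}$ across $F$ to conclude $\smallcat{2}$ is inverse. The main obstacle is the push-through bookkeeping needed to verify that composition in $\cat{3}$ matches composition in $\smallcat{2}$ under $F$ and that the admissibility constraint $p\leqslant\zeta^\dagger\zeta$ behaves well under composition and pseudo-inverse -- each verification ultimately reduces to the inverse-category identities in $\smallcat{1}$ together with commutativity of $A_V$.
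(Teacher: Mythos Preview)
Your proposal is correct and rests on the same core ingredients as the paper's proof: the push-through identity $\zeta p=(\zeta p\zeta^\dagger)\zeta$, the resulting normal form for $\smallcat{2}$-morphisms as an $\smallcat{1}$-morphism composed with an idempotent from the enlarged commutative monoid, and the fact that all idempotents of $\smallcat{2}$ land in that monoid and hence commute. The packaging differs. The paper works directly inside $\smallcat{2}$: it writes every morphism as $\sigma\zeta$ with $\sigma\in\Lambda_{\cod\zeta}$ (idempotent on the codomain side rather than your domain side, which is equivalent by push-through), exhibits $\zeta^\dagger\sigma$ as a pseudo-inverse, checks that every idempotent of $\smallcat{2}$ already lies in some $\Lambda_{\mathfrak{v}}$, and concludes via Lemma~\ref{lem:inverse.characterization}. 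You instead build an auxiliary category $\cat{3}$ of normalized pairs $(\zeta,p)$ with $p\leqslant\zeta^\dagger\zeta$, verify $\cat{3}$ is inverse, and transport inverseness along the surjective evaluation functor $\cat{3}\to\smallcat{2}$ via Lemma~\ref{lem:wlog}. Your route is more structural and makes the normal form explicit as an honest category, at the cost of verifying associativity and admissibility in $\cat{3}$; the paper's route is shorter since it never leaves $\smallcat{2}$ and avoids constructing any auxiliary object. Either argument is complete.
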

\begin{proof}
	Fix an $\smallcat{1}$-object $\mathfrak{v}$.  
	Let $\Lambda_{\mathfrak{v}}$ denote the set of all idempotent $k$-linear combinations of projection operators on an $\smallcat{1}$-object $\mathfrak{v}$.
	The elements in $\Lambda_{\mathfrak{v}}$ commute because idempotents in $\smallcat{1}(\mathfrak{v},\mathfrak{v})$ commute [Lemma \ref{lem:inverse.characterization}].

	Let $\zeta$ denote an $\smallcat{1}$-morphism. 
	
	For each $\zeta$ and linear operator $\tau$ on the domain of $\psi$, let $\conjugate{\tau}{\zeta}=\zeta\tau\zeta^\dagger$.
  
	Consider $\zeta$ and $\tau\in\Lambda_{\mathfrak{dom}\,\zeta}$.  
	Then $\zeta^\dagger\zeta$ commutes with all projection operators on $\mathfrak{v}$ in $\smallcat{1}$ [Lemma \ref{lem:inverse.characterization}] and hence with $\pi$.
	In particular $(\conjugate{\tau}{\zeta})^2=\zeta\tau(\zeta^\dagger\zeta)\tau\zeta^\dagger=\zeta\zeta^\dagger\zeta\tau^2\zeta^\dagger=\zeta\tau\zeta^\dagger=\conjugate{\tau}{\zeta}$ and $\conjugate{\tau}{\zeta}$ is $k$-linear in $\tau$.
	Hence $\conjugate{\tau}{\zeta}\in\Lambda_{\mathfrak{cod}\,\zeta}$
	And moreover $\zeta\tau=\zeta(\zeta^\dagger\zeta)\tau=\zeta\tau(\zeta^\dagger\zeta)=\conjugate{\tau}{\zeta}\zeta$.

	Let $(\sigma,\zeta)$ denote a pair with $\zeta$ an $\smallcat{1}$-morphism and $\sigma\in\Lambda_{\mathfrak{cod}\,\zeta}$.
	It follows inductively that all linear maps of the form $\sigma\zeta$ for all possible choices $(\sigma,\zeta)$ are closed under	composition and hence comprise all the $\smallcat{2}$-morphisms by minimality.  
	It then follows that each $\smallcat{2}$-morphism, of the form $\sigma\zeta$ for a choice $(\sigma,\zeta)$, admits a pseudo-inverse $\zeta^{\dagger}\sigma$ because $(\sigma\zeta)(\zeta^{\dagger}\sigma)(\sigma\zeta)=\sigma(\zeta\zeta^{\dagger})\sigma\sigma\zeta=(\zeta\zeta^{\dagger})\sigma\zeta=\sigma\zeta\zeta^\dagger\zeta=\sigma\zeta$ and $(\zeta^{\dagger}\sigma)(\sigma\zeta)(\zeta^{\dagger}\sigma)=\zeta^{\dagger}\sigma(\zeta(\zeta^{\dagger})\sigma=\zeta^\dagger(\zeta(\zeta^{\dagger})\sigma=\zeta^\dagger\sigma$.  
	Every idempotent in $\smallcat{2}$, of the form $\sigma\zeta$ for a choice $(\sigma,\zeta)$, satisfies $\sigma\zeta=\sigma\zeta\zeta^\dagger$ and therefore a composite of $\Lambda_{\mathfrak{dom}\,\zeta}$-operators, lies in $\Lambda_{\mathfrak{dom}\,\zeta}$.
	Thus the idempotent endomorphisms in $\smallcat{2}$ commute.
	Hence $\smallcat{2}$ is inverse [Lemma \ref{lem:inverse.characterization}].
\end{proof}

Fix \textit{finite} inverse category $\smallcat{1}$ and ring $R$.
There exists an $R$-algebra isomorphism
\begin{equation}
	\label{eqn:decomposition}
  R[\smallcat{1}]\cong R[\gpd{\smallcat{1}}]
\end{equation}
from the associated category $R$-algebra $R[\smallcat{1}]$ to the groupoid $R$-algebra $R[\gpd{\smallcat{1}}]$, sending each $\smallcat{1}$-morphism $\zeta$ to an $R$-linear sum of $\gpd{\smallcat{1}}$-morphisms of the form $[\zeta\pi]$ with $\pi$ an idempotent endomorphism \cite[Theorem 4.1]{linckelmann2013inverse}.
Specialize now to the case where each submonoid of $\smallcat{1}$ is idempotent.
For each $\smallcat{1}$-endomorphism $\zeta$, $\zeta=\zeta^\dagger=\zeta\zeta^\dagger=\zeta^\dagger\zeta$ by uniqueness of pseudo-inverses  and hence $[\zeta]$ is an identity.
Thus the aforementioned isomorphism (\ref{eqn:decomposition}) sends each $\smallcat{1}$-morphism $\zeta\in\smallcat{1}(x,y)$ to an $R$-linear sum of identities or an $R$-linear sum of isomorphisms whose preimages under (\ref{eqn:decomposition}) are isomorphisms between distinct objects.  
We further specialize to the case $R=k$ to conclude the following observation.

\begin{prop}
	\label{prop:finite.inverse}
	Consider the following data.
  \begin{enumerate}
		\item finite inverse category $\smallcat{1}$ whose submonoids are all idempotent
		\item representation $\nabla:\smallcat{1}\ra\VECTORSPACES_k$
	\end{enumerate}
	Then $\nabla$ is a direct sum of blockcodes.
\end{prop}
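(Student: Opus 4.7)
The plan is to transfer $\nabla$ across the algebra isomorphism (\ref{eqn:decomposition}) into a representation of $\gpd{\smallcat{1}}$, decompose it there into one-dimensional summands supported on connected components, and then pull the decomposition back to $\smallcat{1}$ and verify each summand is a blockcode.

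First I would record two rigidity properties of $\gpd{\smallcat{1}}$ under the idempotent-submonoid hypothesis. The preamble already notes that $[\zeta]$ is an identity for every $\smallcat{1}$-endomorphism $\zeta$; applying the same calculation $\zeta^2=\zeta \Rightarrow \zeta^\dagger=\zeta \Rightarrow \zeta^\dagger\zeta=\zeta\zeta^\dagger=\zeta$ to an arbitrary $\gpd{\smallcat{1}}$-morphism $[\zeta]$ whose source and target idempotents lie at the same $\smallcat{1}$-object shows that such a morphism must be an identity. Hence every vertex group of $\gpd{\smallcat{1}}$ is trivial and, more strongly, distinct idempotents at a common $\smallcat{1}$-object lie in distinct connected components of $\gpd{\smallcat{1}}$.

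Next I would view $\nabla$ as a $k[\smallcat{1}]$-module and transfer it across (\ref{eqn:decomposition}) to a $k[\gpd{\smallcat{1}}]$-module, equivalently a representation $\tilde\nabla$ of the groupoid $\gpd{\smallcat{1}}$. Because each connected component of $\gpd{\smallcat{1}}$ has trivial vertex group, it is indiscrete on its object set, and its representations split uniquely as direct sums over the components. Within a component $\alpha$, fixing any $e_0\in\alpha$, choosing a basis of $\tilde\nabla(e_0)$, and propagating via the unique groupoid isomorphisms produces compatible bases at every object of $\alpha$, yielding a decomposition $\tilde\nabla=\bigoplus_{\alpha,i}\tilde\nabla_{\alpha,i}$ where each $\tilde\nabla_{\alpha,i}$ is one-dimensional at every object of component $\alpha$ and zero elsewhere.

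Finally I would pull the decomposition back to $\smallcat{1}$. Unit-preservation of (\ref{eqn:decomposition}) forces the image of $\id_o$ to be $\sum_{e\text{ idempotent at }o}\id_e$: idempotency of $\id_o$ and orthogonality of the $\id_e$ constrain the expansion coefficients to $\{0,1\}$, and matching global identities forces them all to equal $1$. Hence $\nabla_{\alpha,i}(o)=\bigoplus_{e\text{ at }o}\tilde\nabla_{\alpha,i}(e)$, which by the component-separation property has dimension $0$ or $1$. Every $k$-linear map between spaces of dimension at most $1$ is either zero or an isomorphism, so each $\nabla_{\alpha,i}$ is a blockcode and $\nabla=\bigoplus_{\alpha,i}\nabla_{\alpha,i}$ is a direct sum of blockcodes. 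I expect the main effort to go into the rigidity argument of the first step; once trivial vertex groups and component-separation of same-object idempotents are in place, the rest is bookkeeping across the module--functor dictionary and the structure of groupoids with trivial vertex groups.
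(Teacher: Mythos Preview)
Your proposal is correct and follows the same route as the paper: both arguments rest on the Linckelmann isomorphism (\ref{eqn:decomposition}) and the observation, made in the paragraph preceding the proposition, that under the idempotent-submonoid hypothesis every $\smallcat{1}$-endomorphism $\zeta$ satisfies $[\zeta]=\id$. You have simply made explicit what the paper leaves to the reader, namely the passage from ``$[\zeta]$ is an identity for endomorphisms'' to ``$\gpd{\smallcat{1}}$ has trivial vertex groups and separates same-object idempotents into distinct components,'' and the bookkeeping that turns the resulting one-dimensional groupoid summands into blockcodes on $\smallcat{1}$ via the module--functor dictionary.
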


\subsection{Categories}
This section investigates the property of factorizability of a general representation through an inverse category.
This kind of factorizability is stable under direct sums.

\begin{lem}
	\label{lem:direct.sums}
  Consider a sequence of representations
	$$\nabla_1,\nabla_2,\ldots,\nabla_n:\smallcat{1}\ra\VECTORSPACES_k$$
	that each factor through (possibly different) inverse categories.  
	Then $\bigoplus_i\nabla_i$ factors through an inverse category.
\end{lem}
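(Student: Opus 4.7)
The plan is to package the $\nabla_i$'s together through the product inverse category, then postcompose with an appropriate direct sum functor.

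First I would write each factorization as $\nabla_i = G_i \circ F_i$ where $F_i:\smallcat{1}\ra\cat{2}_i$ and $G_i:\cat{2}_i\ra\VECTORSPACES_k$ with $\cat{2}_i$ inverse. By Lemma \ref{lem:inverse.products}, the product $\cat{2}=\prod_{i=1}^n\cat{2}_i$ in $\CATS$ is again an inverse category. The universal property of the product in $\CATS$ then yields a unique functor $F:\smallcat{1}\ra\cat{2}$ whose $i$-th projection is $F_i$.

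Next I would construct a functor $G:\cat{2}\ra\VECTORSPACES_k$ sending a tuple of objects $(V_1,\ldots,V_n)$ to $V_1\oplus\cdots\oplus V_n$ and a tuple of morphisms $(\phi_1,\ldots,\phi_n)$ to the block-diagonal linear map $\phi_1\oplus\cdots\oplus\phi_n$. Functoriality is immediate from the fact that direct sums of composites of linear maps are composites of direct sums, and identities are preserved block-wise. More explicitly, $G$ may be viewed as the composite of the product functor $\prod_iG_i:\cat{2}\ra\prod_i\VECTORSPACES_k$ with the direct sum functor $\prod_i\VECTORSPACES_k\ra\VECTORSPACES_k$.

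Finally I would verify that $G\circ F$ coincides with $\bigoplus_i\nabla_i$. On an object $o$, $F(o)=(F_1(o),\ldots,F_n(o))$ and therefore $G(F(o))=\bigoplus_iG_i(F_i(o))=\bigoplus_i\nabla_i(o)$; on a morphism $\zeta$, the same reasoning yields $G(F(\zeta))=\bigoplus_iG_i(F_i(\zeta))=\bigoplus_i\nabla_i(\zeta)$. Thus $\bigoplus_i\nabla_i$ factors as $\smallcat{1}\xra{F}\cat{2}\xra{G}\VECTORSPACES_k$ through the inverse category $\cat{2}$. There is no real obstacle here; the only point worth flagging is that one must invoke Lemma \ref{lem:inverse.products} to know that the product of inverse categories is inverse, and that the definition of $G$ uses nothing beyond the functoriality of direct sums of linear maps.
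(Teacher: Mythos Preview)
Your proposal is correct and is essentially the same argument as the paper's: factor each $\nabla_i$ through an inverse category, take the product of those inverse categories (inverse by Lemma~\ref{lem:inverse.products}), map into it via the tuple of the $F_i$'s, and out of it via the direct-sum functor built from the $G_i$'s. The only difference is notational and that you spell out the verification of $G\circ F=\bigoplus_i\nabla_i$ a bit more explicitly.
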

\begin{proof}
	Suppose for each $i$, $\nabla_i=\nabla'_i\tau_i$ for functors $\tau_i:\smallcat{1}\ra\smallcat{2}_i$ and $\nabla'_i:\smallcat{2}_i\ra\VECTORSPACES_k$ with $\smallcat{2}_i$ inverse.
	Let $\smallcat{2}=\prod_i\smallcat{2}_i$.  
	Then $\smallcat{2}$ is inverse [Lemma \ref{lem:inverse.products}].  
	Let $\nabla'$ be the representation $\smallcat{2}\ra\VECTORSPACES_k$ naturally sending an object $(y_1,y_2.,\ldots,y_n)$ to $\bigoplus_{i}\nabla'_i(y_i)$.  
  Then $\bigoplus_i\nabla_i=\nabla'(\prod_i\tau_i)$.
\end{proof}

This kind of factorizability generalizes direct sums of blockcodes.

\begin{prop}
  \label{prop:one-way}
  Direct sums of blockcodes factor through inverse categories.
\end{prop}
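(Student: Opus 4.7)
The plan is to reduce to the case of a single blockcode using Lemma \ref{lem:direct.sums} and then exhibit an explicit inverse subcategory of $\VECTORSPACES_k$ through which any blockcode factors.

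By Lemma \ref{lem:direct.sums}, if each summand $\nabla_i$ of $\bigoplus_i\nabla_i$ factors through some inverse category, so does the direct sum. It therefore suffices to treat a single blockcode $\nabla:\smallcat{1}\ra\VECTORSPACES_k$, namely a representation all of whose morphisms are sent to a zero map or an isomorphism.

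For such a $\nabla$, I would define $\smallcat{2}$ to be the subcategory of $\VECTORSPACES_k$ whose objects are the vector spaces $\nabla(o)$ ranging over $\smallcat{1}$-objects $o$, and whose morphisms between any two such objects $V,W$ consist of all linear isomorphisms $V\ra W$ together with the zero map $V\ra W$. The class of identities lies in $\smallcat{2}$, and this class of morphisms is closed under composition because a composite of isomorphisms is an isomorphism while any composite involving a zero map is zero. By the defining property of a blockcode, $\nabla$ factors as $\smallcat{1}\ra\smallcat{2}\ira\VECTORSPACES_k$.

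It remains to verify that $\smallcat{2}$ is an inverse category. Each isomorphism $\phi$ in $\smallcat{2}$ admits its set-theoretic inverse as a pseudo-inverse, and the equations $\phi\psi\phi=\phi$ and $\psi\phi\psi=\psi$ force any candidate pseudo-inverse $\psi$ to coincide with $\phi^{-1}$ after cancellation by $\phi^{\pm 1}$. Each zero map $0:V\ra W$ has the zero map $W\ra V$ as a pseudo-inverse; any candidate pseudo-inverse $\psi:W\ra V$ satisfies $\psi=\psi\circ 0\circ\psi=0$. Hence every morphism of $\smallcat{2}$ has a unique pseudo-inverse, so $\smallcat{2}$ is inverse and the factorization above completes the proof. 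The argument is bookkeeping throughout and I do not anticipate any real obstacle; the only point worth a glance is the uniqueness of pseudo-inverses for zero maps, which is immediate from the second defining equation.
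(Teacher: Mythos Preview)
Your proof is correct and follows essentially the same approach as the paper: reduce to a single blockcode via Lemma~\ref{lem:direct.sums}, then factor through a subcategory of $\VECTORSPACES_k$ whose morphisms are isomorphisms and zero maps. The only cosmetic difference is that the paper takes the subcategory generated by the image of $\nabla$ together with pseudo-inverses and checks inverseness via Lemma~\ref{lem:inverse.characterization}, whereas you take the larger category of \emph{all} isomorphisms and zero maps among the objects $\nabla(o)$ and verify uniqueness of pseudo-inverses directly from the defining equations.
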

\begin{proof}
  Consider a blockcode $\nabla:\smallcat{1}\ra\VECTORSPACES_k$.
  It suffices to show $\nabla$ factors through an inverse category [Lemma \ref{lem:direct.sums}]. 
  It further suffices to take $\nabla$ to be an inclusion [Lemma \ref{lem:wlog}].

  Consider an $\smallcat{1}$-morphism $\psi$.  
  Either $\psi$ is a $0$-map or an isomorphism.
  In the former case, take $\psi^{\dagger}$ to be the $0$-map $\cod(\psi)\ra\dom(\psi)$.  
  In the latter case, take $\psi^{\dagger}=\psi^{-1}$.  
	
  Let $\smallcat{2}$ be the category generated by all maps of the form $\psi$ and $\psi^{\dagger}$ for $\psi$ an $\smallcat{1}$-morphism.  
  Every $\smallcat{2}$-morphism is then either a $0$-map or a composite of isomorphisms in $\smallcat{2}$ whose inverses are in $\smallcat{2}$.
  In either case, each $\smallcat{2}$-morphism admits a pseudo-inverse in $\smallcat{2}$.
  Moreover, the idempotent endomorphisms in $\smallcat{2}$, $0$-maps and identities, commute.  
  Thus $\smallcat{2}$ is inverse [Lemma \ref{lem:inverse.characterization}].
\end{proof}

The salient structure needed to determine factorizability of a representation through an inverse category is the data of all associated kernels, images, and intersections thereof in a diagram.

\begin{defn}
	\label{defn:multiflag}
	Fix a representation of the form
	$$\nabla:\smallcat{1}\ra\VECTORSPACES_k.$$
	Let $\flag_\nabla$ denote the functor that is initial among all functors $F:\smallcat{1}\ra\CATS$ sending each object $o$ to the poset of linear subspaces of $\nabla(o)$ ordered by inclusion, closed under finite intersection, containing both ${\bf 0}$ and $\nabla(o)$ such that for each $\smallcat{1}$-morphism $\zeta$: (1) $F(\zeta)(\mathfrak{a})=\nabla(\zeta)(\mathfrak{a})$ for all $\mathfrak{a}\in F(\dom\,\zeta)$; and (2) $\nabla(\zeta)^{-1}\mathfrak{b}\in F(\dom\,\zeta)$ for all $\mathfrak{b}\in F(\cod\,\zeta)$.
\end{defn}

The well-posedness of the definition follows because the class of functors $F:\smallcat{1}\ra\CATS$ satisfying the above conditions (1)-(3) are closed under object-wise intersections and contains the functor sending each object $o$ to the poset of all linear subspaces of $\nabla(o)$.  
Concretely, $\flag_{\nabla}$ can be iteratively constructed from $\nabla$ by starting with the minimal function from the objects of $\smallcat{1}$ to the objects of $\CATS$ satisfying (1) and iteratively adding objects to make (2) and (3) hold.  
For certain representations $\nabla$ of inverse categories, $\flag_{\nabla}$ is just the data of all possible images of maps in the diagram $\nabla$.    

\begin{lem}
	\label{lem:inverse.flags}
	Consider an inverse category $\smallcat{1}$ that is a subcategory 
	$$\smallcat{1}\subset\VECTORSPACES_k$$
	such that all $k$-linear combinations of projections operators in $\smallcat{1}$ on a fixed $\smallcat{1}$-object that are themselves projections are in $\smallcat{1}$.
	The underlying set of $\flag_{\smallcat{1}\ira\VECTORSPACES_k}(\mathfrak{v})$ consists of all images of all $\smallcat{1}$-morphisms having codomain $\mathfrak{v}$.
\end{lem}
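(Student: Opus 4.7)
The plan is to prove both inclusions separately. Write $S(\mathfrak{v})$ for the claimed set of images of $\smallcat{1}$-morphisms with codomain $\mathfrak{v}$. The containment $S(\mathfrak{v})\subset\flag_{\nabla}(\mathfrak{v})$ is immediate from defining condition (1) of $\flag_{\nabla}$: for each $\smallcat{1}$-morphism $\zeta:x\to\mathfrak{v}$, the object $\dom\,\zeta$ lies in $\flag_{\nabla}(x)$, whence $\im\,\zeta=\nabla(\zeta)(\dom\,\zeta)\in\flag_{\nabla}(\mathfrak{v})$. For the reverse inclusion, the strategy is to equip $o\mapsto S(o)$ with the pushforward action $\zeta\cdot\im\,\alpha=\im\,(\zeta\alpha)$, verify that this functor satisfies the conditions defining $\flag_{\nabla}$, and invoke initiality to conclude $\flag_{\nabla}(\mathfrak{v})\subset S(\mathfrak{v})$.

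Verification breaks into four routine parts and one substantive part. Routinely: $\mathfrak{v}=\im\,\id_{\mathfrak{v}}\in S(\mathfrak{v})$; $\mathbf{0}=\im\,(\id_{\mathfrak{v}}-\id_{\mathfrak{v}})\in S(\mathfrak{v})$, since the zero operator is an idempotent $k$-linear combination of the projection $\id_{\mathfrak{v}}$ and hence lies in $\smallcat{1}$ by the closure hypothesis; pushforward closure is the identity $\nabla(\zeta)(\im\,\alpha)=\im\,(\zeta\alpha)$; and intersection closure follows by observing that for $\alpha,\beta$ mapping into $\mathfrak{v}$, the endomorphisms $\alpha\alpha^{\dagger}$ and $\beta\beta^{\dagger}$ are commuting projections in $\smallcat{1}$ [Lemma \ref{lem:inverse.characterization}], so their product is a projection in $\smallcat{1}$ with image $\im\,\alpha\cap\im\,\beta$ [Lemma \ref{lem:pseudo-inverse.maps}].

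The main obstacle is closure under preimage. Given $\zeta:x\to y$ and $\alpha:z\to y$ in $\smallcat{1}$, Lemma \ref{lem:kernel.decomposition} gives $\zeta^{-1}(\im\,\alpha)=\kernel\,\zeta+\im\,\zeta^{\dagger}\alpha$, and the task is to realize this sum as the image of a single $\smallcat{1}$-endomorphism of $x$. Set $p=\id_{x}-\zeta^{\dagger}\zeta$ and $q=\zeta^{\dagger}\alpha(\zeta^{\dagger}\alpha)^{\dagger}=\zeta^{\dagger}\alpha\alpha^{\dagger}\zeta$ [Lemma \ref{lem:involution}]; these are projections in $\smallcat{1}$ with images $\kernel\,\zeta$ and $\im\,\zeta^{\dagger}\alpha$ respectively [Lemma \ref{lem:pseudo-inverse.maps}]. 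Because idempotent endomorphisms in an inverse category commute [Lemma \ref{lem:inverse.characterization}], $p$ and $q$ commute, so the $k$-linear combination $p+q-pq$ is once again a projection with image $\im\,p+\im\,q$; the closure hypothesis on $\smallcat{1}$ then places $p+q-pq$ back inside $\smallcat{1}$, realizing $\zeta^{-1}(\im\,\alpha)$ as the image of a $\smallcat{1}$-endomorphism of $x$, at which point initiality of $\flag_{\nabla}$ completes the argument.
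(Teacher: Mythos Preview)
Your proof is correct and follows essentially the same route as the paper: define $S(\mathfrak{v})$ as the set of images, verify the defining conditions of $\flag_{\nabla}$, and handle the preimage step via Lemma~\ref{lem:kernel.decomposition} together with the projection $p+q-pq$ built from $p=\id-\zeta^{\dagger}\zeta$ and $q=\zeta^{\dagger}\alpha\alpha^{\dagger}\zeta$, exactly the paper's $\pi_1,\pi_2$. You are in fact slightly more thorough than the paper in that you explicitly check closure under finite intersections (via the product $\alpha\alpha^{\dagger}\,\beta\beta^{\dagger}$), a step the paper leaves implicit; the citation there should really be Lemma~\ref{lem:commuting.projections} rather than Lemma~\ref{lem:pseudo-inverse.maps}.
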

\begin{proof}
	Let $\mathfrak{a},\mathfrak{b}$ denote $\smallcat{1}$-objects and $\zeta$ denote an $\smallcat{1}$-morphism.  
	Let
	$$F(\mathfrak{b})=\!\!\!\!\!\bigcup_{\cod(\zeta)=\mathfrak{b}}\!\!\!\!\!\{\im\,\zeta\}$$
	Then $F(\mathfrak{b})\subset\flag_{\smallcat{1}\ira\VECTORSPACES_k}(\mathfrak{b})$ by construction and ${\bf 0}=\im\,(\id_{\mathfrak{b}}-\id_{\mathfrak{b}}),\mathfrak{b}=\im\,\id_{\mathfrak{b}}\in F(\mathfrak{b})$.
	It therefore suffices to show $F$ satisfies (1)-(2) of Definition \ref{defn:multiflag}, and in particular also defines a functor $\smallcat{1}\ra\CATS$.  
	The result then follows from the initality of $\flag_{\smallcat{1}\ira\VECTORSPACES_k}$.

	Fix an $\smallcat{1}$-morphism $\tau$.

	Consider $\zeta$ with $\cod\,\zeta=\dom\,\tau$.  
	Then $\tau(\im\,\zeta)=\im\,\tau\zeta\in\flag_{\smallcat{1}\ira\VECTORSPACES_k}(\cod\,\tau)$.

	Consider $\zeta$ with $\cod\,\zeta=\cod\,\tau$.
	Let $\sigma=\tau^\dagger\zeta\zeta^\dagger$.
	Let $\pi_1=\id_{\cod\,\tau}-\tau^\dagger\tau$ and $\pi_2=\tau^\dagger\zeta\zeta^\dagger\tau$.
	Then $\pi_1,\pi_2,\pi_1\pi_2$ and hence also $\pi_1+\pi_2-\pi_1\pi_2$ are all idempotent endomorphisms in $\smallcat{1}$ [Lemma \ref{lem:commute}]. 
	Note $\tau^{-1}(\im\,\zeta)=\kernel\,\tau+\im\,\tau^\dagger\zeta=\kernel\,\tau^\dagger\tau+\im\,\pi_2=\im\,\pi_1+\im\,\pi_2=\im\,(\pi_1+\pi_2-\pi_1\pi_2)\in F(\cod\,\tau)$ [Lemma \ref{lem:kernel.decomposition}].
\end{proof}

\begin{thm}
	\label{thm:inverse-extendability}
	\InverseExtendability{}
\end{thm}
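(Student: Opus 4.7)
The plan is to prove $(1)\Leftrightarrow(2)$ by applying Lemma \ref{lem:realizability} at each $\smallcat{1}$-object $o$ to the sub-poset $\flag_{\nabla}(o)$ equipped with its tautological inclusion representation into $\nabla(o)$: factorizability of $\nabla$ through an inverse category is reformulated, object-wise, as the existence of a commutative idempotent monoid of projections on $\nabla(o)$ realizing every element of $\flag_{\nabla}(o)$ as an image.

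For $(1)\Rightarrow(2)$, suppose $\nabla=GF$ with $\smallcat{2}$ inverse. Lemmas \ref{lem:wlog} and \ref{lem:completion} let me take $\smallcat{2}$ to be an inverse subcategory of $\VECTORSPACES_k$ that is closed under idempotent $k$-linear combinations of its projection endomorphisms. By Lemma \ref{lem:inverse.characterization} the projection monoid $\Lambda_o\subseteq\smallcat{2}(\nabla(o),\nabla(o))$ is commutative, and by Lemma \ref{lem:inverse.flags} every $\mathfrak{a}\in\flag_{\nabla}(o)$ arises as $\im\,\pi_{\mathfrak{a}}$ for some $\pi_{\mathfrak{a}}\in\Lambda_o$. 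The closure property moreover forces $\pi+\pi'-\pi\pi'\in\Lambda_o$ whenever $\pi,\pi'\in\Lambda_o$, so a commuting projection onto every finite sum of $\flag_{\nabla}(o)$-elements lies in $\Lambda_o$. Lemma \ref{lem:commute} then supplies the dimension identity $\dim\,\mathfrak{d}=\dim(\mathfrak{d}\cap\mathfrak{c})+\dim(\mathfrak{d}\cap\kernel\,\pi_{\mathfrak{c}})$ for every such $\mathfrak{d}$ and every $\mathfrak{c}\in\flag_{\nabla}(o)$; applying this to $\mathfrak{b}$ and to $\mathfrak{d}=\sum_{\mathfrak{a}<\mathfrak{b}}\mathfrak{a}$ reproduces the computation at the end of the proof of Lemma \ref{lem:realizability}, which yields the M\"{o}bius inequality in (2).

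For $(2)\Rightarrow(1)$, Lemma \ref{lem:realizability} produces, at each $o$, a representation $\Pi_o:\Sigma_{\flag_{\nabla}(o)}\ra\VECTORSPACES_k$ of the intersection semilattice on $\flag_{\nabla}(o)$, realizing each $\mathfrak{a}$ as the image of an idempotent $\Pi_o(\mathfrak{a})$, with all $\Pi_o(\mathfrak{a})$'s mutually commuting. Because $\im\,\nabla(\zeta)$ and $\kernel\,\nabla(\zeta)$ both lie in the relevant flag, I define, for each $\zeta:x\ra y$ in $\smallcat{1}$, a putative pseudo-inverse $\nabla(\zeta)^{\dagger}$ by inverting the isomorphism $\nabla(\zeta)$ induces from $\im\,(\id-\Pi_x(\kernel\,\nabla(\zeta)))$ onto $\im\,\nabla(\zeta)$ and precomposing with $\Pi_y(\im\,\nabla(\zeta))$; a direct calculation yields $\nabla(\zeta)\nabla(\zeta)^{\dagger}=\Pi_y(\im\,\nabla(\zeta))$ and $\nabla(\zeta)^{\dagger}\nabla(\zeta)=\id-\Pi_x(\kernel\,\nabla(\zeta))$. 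Let $\smallcat{2}$ be the subcategory of $\VECTORSPACES_k$ generated by the $\nabla(\zeta)$'s, the $\nabla(\zeta)^{\dagger}$'s, and the $\Pi_o(\mathfrak{a})$'s, enlarged to be closed under idempotent $k$-linear combinations of its projection endomorphisms.

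The hard step is showing $\smallcat{2}$ is inverse. By Lemma \ref{lem:inverse.characterization} this reduces to pseudo-invertibility of every morphism and commutativity of all projection endomorphisms at each object; pseudo-invertibility follows inductively from Lemma \ref{lem:involution} once commutativity is known, since the reversed composite of pseudo-inverses then serves as a pseudo-inverse of the composite. For commutativity, I would show that every projection endomorphism of $\nabla(o)$ in $\smallcat{2}$ can be rewritten, using the two identities $\nabla(\zeta)\nabla(\zeta)^{\dagger}=\Pi_y(\im\,\nabla(\zeta))$ and $\nabla(\zeta)^{\dagger}\nabla(\zeta)=\id-\Pi_x(\kernel\,\nabla(\zeta))$ together with the semilattice relations among the $\Pi_o(\mathfrak{a})$'s, as an idempotent $k$-linear combination of elements in the image of $\Pi_o$; any two such operators commute by construction. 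This normalization step, reducing an arbitrary alternating composite of $\nabla(\zeta)$'s, $\nabla(\zeta)^{\dagger}$'s, and $\Pi_o(\mathfrak{a})$'s to a linear combination of $\Pi_o(\mathfrak{a})$'s, is what I expect to be the main obstacle.
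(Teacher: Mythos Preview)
Your direction $(1)\Rightarrow(2)$ is essentially the paper's: reduce to an inverse subcategory of $\VECTORSPACES_k$ closed under idempotent linear combinations of its projections [Lemmas \ref{lem:wlog}, \ref{lem:completion}], invoke Lemma \ref{lem:inverse.flags} so that each $\mathfrak{a}\in\flag_\nabla(o)$ is the image of a commuting projection, and read off the inequality via the $(2)\Rightarrow(1)$ half of Lemma \ref{lem:realizability}.

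For $(2)\Rightarrow(1)$ the paper takes a different route that sidesteps your normalization step. Rather than generating $\smallcat{2}$ from the $\nabla(\zeta)$'s, the $\nabla(\zeta)^\dagger$'s, and the $\Pi_o(\mathfrak{a})$'s and then analyzing arbitrary words, the paper defines $\smallcat{2}$ \emph{structurally}: it lets $M_{\mathfrak{v}}$ be the commutative idempotent monoid of all idempotent $k$-linear combinations of the chosen projections $\pi_{\mathfrak{c}}$, and takes $\smallcat{2}$ to consist of \emph{all} linear maps $\zeta$ between $\smallcat{1}$-objects whose kernel is the kernel of some $\pi'\in M_{\dom\,\zeta}$ and whose image is the image of some $\pi''\in M_{\cod\,\zeta}$. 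With this definition the two verifications become direct. First, for each $\zeta\in\smallcat{2}$ the paper explicitly builds a pseudo-inverse $\zeta^*$ (inverting the induced isomorphism $\zeta_{\mathfrak{iso}}$ from $\im(\id-\pi_{\kernel\,\zeta})$ onto $\im\,\zeta$ and precomposing with $\pi'_{\im\,\zeta}$), checking $\zeta^*\zeta=\id-\pi_{\kernel\,\zeta}$ and $\zeta\zeta^*=\pi_{\im\,\zeta}$. Second---and this is what replaces your normalization---any idempotent $\zeta\in\smallcat{2}$ is shown to already lie in $M_{\dom\,\zeta}$: since $\zeta$ is a projection onto $\im\,\zeta$ with $\im(\id-\pi_{\kernel\,\zeta})=\im\,\zeta$, one forces $\id-\pi_{\kernel\,\zeta}=\pi_{\im\,\zeta}$ and $\zeta_{\mathfrak{iso}}=\id$, whence $\zeta=\pi_{\im\,\zeta}$. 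Idempotents therefore commute automatically, and Lemma \ref{lem:inverse.characterization} finishes.

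The trade-off is that the paper's $\smallcat{2}$ is much larger than yours, and one must separately know it is closed under composition---a point the paper does not make explicit. Your generated $\smallcat{2}$ is a category by construction, but at the cost of the word-reduction step you flagged. Either way the crux is the same: one must show that the only idempotent endomorphisms arising are those in the $M_{\mathfrak{v}}$'s. The paper's structural definition turns this into a two-line computation about a single $\zeta$ rather than an induction on word length, which is the idea you are missing. (A small side remark: your appeal to Lemma \ref{lem:involution} for inductive pseudo-invertibility is slightly circular, since that lemma presupposes an inverse category; what you actually need is the computation inside its proof, which only uses commutativity of the specific idempotents $\alpha\alpha^\dagger,\beta^\dagger\beta$.)
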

\begin{proof}
	Take $\nabla$ to be inclusion without loss of generality [Lemma \ref{lem:wlog}]. 
	Let $\zeta'$ denote the corestriction of a linear map $\zeta$ to its image.
	\vspace{.1in}\\
	\textit{(1)$\implies$(2):}
	Assume (\ref{item:factors}).  
	There exists an inverse subcategory $\smallcat{2}$ of $\VECTORSPACES_k$ containing $\smallcat{1}$ [Lemma \ref{lem:wlog}].   
	We can assume all idempotent $k$-linear combinations of idempotents in $\smallcat{2}$ are in $\smallcat{2}$ without loss of generality [Lemma \ref{lem:completion}].  
	Consider $\smallcat{1}$-object $o$.
	Then $\flag_{\nabla}(o)\subset\flag_{\smallcat{2}\ira\VECTORSPACES_k}(o)$.  
	Hence each $\mathfrak{c}\in\flag_{\nabla}(o)$ is the image of a linear map $\pi_{\mathfrak{c}}$ in $\smallcat{2}$ [Lemma \ref{lem:inverse.flags}], which we can take to be projection [Lemma \ref{lem:pseudo-inverse.maps}]. 
	And $\pi_{\mathfrak{b}}\pi_{\mathfrak{c}}=\pi_{\mathfrak{c}}\pi_{\mathfrak{b}}$ for all $\mathfrak{b},\mathfrak{c}\in\flag_{\nabla}(o)$ [Lemma \ref{lem:inverse.characterization}] .
	Hence (2) [Lemma \ref{lem:commute}].
	\vspace{.1in}\\
	\textit{(2)$\implies$(1):}
	Assume (2).  
	For each $\smallcat{1}$-object $\mathfrak{v}$ and $\mathfrak{c}\in\flag_{\nabla}(\mathfrak{v})$, there exists a projection $\pi_{\mathfrak{c}}$ on $\mathfrak{v}$ with image $\mathfrak{c}$ such that $\pi_{\mathfrak{a}}\pi_{\mathfrak{b}}=\pi_{\mathfrak{b}}\pi_{\mathfrak{a}}$ for all $\mathfrak{a},\mathfrak{b}\in\flag_{\nabla}(\mathfrak{v})$ [Lemma \ref{lem:commute}].  

	Fix an $\smallcat{1}$-object $\mathfrak{v}$. 
	Let $M_{\mathfrak{v}}$ denote the set of all idempotent $k$-linear combinations of projections of the form $\pi_{\mathfrak{c}}$ for $\mathfrak{c}\in\flag_{\nabla}(\mathfrak{v})$.
	The operators in $M_{\mathfrak{v}}$ commute because $k$-linear combinations of commuting operators commute.
	The operators in $M_{\mathfrak{v}}$ are closed under composition [Lemma \ref{lem:commute}].  
	Hence $M_{\mathfrak{v}}$ can be regarded henceforth as a commutative, idempotent monoid.  
	Let $\smallcat{2}$ denote the class of all linear maps $\zeta$ between $\smallcat{1}$-objects with $\kernel\,\zeta$ the kernel of an operator $\pi'\in M_{\dom\,\psi}$ and $\im\,\zeta$ the image of an operator $\pi''\in M_{\cod\,\psi}$ satisfying $\zeta=\pi''\zeta\pi'$.  

	Fix a $\smallcat{2}$-morphism $\zeta$.  
	There exists a dotted isomorphism making the left of the diagrams
	\begin{equation*}
    \begin{tikzcd}
			\dom\,\zeta\ar{d}[description]{\pi'_{\im\,\zeta^*}=(\id_{\dom\,\zeta}-\pi_{\kernel\,\zeta})'}\ar{rr}[above]{\zeta} 
			& & \cod\,\zeta
			\\
			\im\,(\id_{\dom\,\zeta}-\pi_{\kernel\ \zeta})\ar[dotted]{rr}[below]{\zeta_{\mathfrak{iso}}}
			& & \im\,\zeta\ar[u,hookrightarrow]
		\end{tikzcd}
    \begin{tikzcd}
		  	\cod\,\zeta\ar{d}[left]{\pi'_{\im\,\zeta}}\ar{r}[above]{\zeta^*} 
			& \dom\,\zeta
			\\
		  	\im\,\zeta\ar{r}[below]{\zeta_{\mathfrak{iso}}^{-1}}
			& \im\,(\id_{\dom\,\zeta}-\pi_{\kernel\,\zeta})\ar[u,hookrightarrow]
		\end{tikzcd}
	\end{equation*}
	commute.  
	The map $\zeta^*$ making the right diagram commute is in $\smallcat{2}$ by $\kernel\,\zeta^*=\kernel\,\pi'_{\im\,\zeta}=\kernel\,\pi_{\im\,\zeta}=\im\,\id_{\cod\,\zeta}-\pi_{\im\,\zeta}$ and $\im\,\zeta^*=\im\,(\id_{\cod\,\zeta}-\pi_{\kernel\,\zeta})$.
  Note
  \begin{align*}
		\zeta^*\zeta&=
		(\im\,(\id_{\dom\,\zeta}-\pi_{\kernel\,\zeta})\ira\dom\,\zeta)\zeta_{\mathfrak{iso}}^{-1}\zeta_{\mathfrak{iso}}(\id_{\dom\,\zeta}-\pi_{\kernel\,\zeta})'\\
		&=(\im\,(\id_{\dom\,\zeta}-\pi_{\kernel\,\zeta})\ira\dom\,\zeta)(\id_{\dom\,\zeta}-\pi_{\kernel\,\zeta})'\\
		&=\id_{\dom\,\zeta}-\pi_{\kernel\,\zeta}
	\end{align*}
	and similarly $\zeta\zeta^*=\pi_{\im\,\zeta}$.
	Thus we have that $\zeta\zeta^*\zeta=\zeta(\id_{\dom\,\zeta}-\pi_{\kernel\,\zeta})=\zeta$ and $\zeta^*\zeta\zeta^*=\pi_{\im\,\zeta}\zeta^*=\zeta^*$.  
	
	Suppose $\zeta$ is an idempotent in a submonoid of $\smallcat{2}$. 
	Then $\zeta$ is projection onto $\im\,\zeta$.
	Hence $\im\,(\id_{\dom\,\zeta}-\pi_{\kernel\,\zeta})=\im\,\zeta$. 
	Hence $\id_{\dom\,\zeta}-\pi_{\kernel\,\zeta}=\pi_{\im\,\zeta}$ because each morphism in $M_{\dom\,\zeta}$, whose set of morphisms is partially ordered by inclusions of images [Lemma \ref{lem:commute}], is determined by its image. 
	And $\zeta_{\mathfrak{iso}}=\zeta_{\mathfrak{iso}}^2$ because $\zeta_{\mathfrak{iso}}(v)=\zeta(v)=\zeta^2(v)=\zeta^2_{\mathfrak{iso}}(v)$ for all $v\in\im\,\zeta$.  
	Hence $\zeta_{\mathfrak{iso}}=\id_{\dom\,\zeta}$ because it is an idempotent isomorphism.  
	Hence $\zeta=\pi_{\im\,\zeta}\id_{\dom\,\zeta}\pi_{\im\,\zeta}=\pi_{\im\,\zeta}$.

	Hence $\smallcat{2}$ is inverse because all $\smallcat{2}$-morphisms have pseudo-inverses and the idempotents in all submonoids of $\smallcat{2}$ commute [Lemma \ref{lem:inverse.characterization}]. 
	Moreover, $\smallcat{2}$ contains $\smallcat{1}$ because for each $\smallcat{1}$-morphism $\zeta$, $\zeta=\pi_{\im\,\zeta}\zeta(\id_{\dom\,\zeta}-\pi_{\kernel\,\zeta})$.  
	Hence (1).  
\end{proof}

\begin{cor}
	\label{cor:decomposability}
	\Decomposability{}
\end{cor}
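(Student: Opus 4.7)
The plan is to establish the cycle $(4)\iff(1)\implies(2)\implies(3)\implies(1)$, which suffices for mutual equivalence of all four conditions. The equivalence $(1)\iff(4)$ is immediate from Theorem \ref{thm:inverse-extendability}. The implication $(2)\implies(3)$ is precisely Proposition \ref{prop:finite.inverse}, and $(3)\implies(1)$ is precisely Proposition \ref{prop:one-way}. The only nontrivial work is establishing $(1)\implies(2)$.

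For this implication I would apply Lemma \ref{lem:quivers}, which asserts that a functor from the free category $Q^{*}$ to an inverse category that is injective on objects factors through a finite inverse category with all submonoids idempotent. The main obstacle is that a generic factorization $\nabla=F\tau$ of $\nabla$ through an inverse category $\smallcat{2}$ need not have $\tau$ injective on objects, and different vertices of $Q$ may map to the same object of $\smallcat{2}$. I would circumvent this using an auxiliary indiscrete groupoid. Let $K$ denote the indiscrete groupoid on the finite vertex set of $Q$: being a finite groupoid, it is an inverse category. By Lemma \ref{lem:inverse.products}, the product $K\times\smallcat{2}$ is again inverse.

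Define $\bar\tau:Q^{*}\to K\times\smallcat{2}$ on objects by $v\mapsto(v,\tau(v))$ and on a generating edge $e:v\to w$ by $e\mapsto(e_{K},\tau(e))$, where $e_{K}$ is the unique $K$-morphism $v\to w$. By construction $\bar\tau$ is injective on objects, and composition with the second projection $\pi_{2}:K\times\smallcat{2}\to\smallcat{2}$ recovers $\tau$. Applying Lemma \ref{lem:quivers} to $\bar\tau$ yields a factorization $\bar\tau=G\sigma$ with $\sigma:Q^{*}\to\smallcat{3}$ and $G:\smallcat{3}\to K\times\smallcat{2}$, where $\smallcat{3}$ is a finite inverse category whose submonoids are all idempotent. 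Then the composite functor $F\pi_{2}G:\smallcat{3}\to\VECTORSPACES_{k}$ provides the factorization $\nabla=(F\pi_{2}G)\sigma$ required by $(2)$. The main obstacle is the injectivity-on-objects hypothesis in Lemma \ref{lem:quivers}; once handled by the indiscrete-groupoid trick above, the rest reduces to reassembling the cited lemmas and propositions.
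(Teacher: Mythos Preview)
Your argument is correct and follows essentially the same route as the paper, cycling through Theorem \ref{thm:inverse-extendability}, Lemma \ref{lem:quivers}, Proposition \ref{prop:finite.inverse}, and Proposition \ref{prop:one-way}; your indiscrete-groupoid construction makes explicit the object-injectivity hypothesis of Lemma \ref{lem:quivers} that the paper simply leaves implicit. One small slip: $(2)\Rightarrow(3)$ is not \emph{literally} Proposition \ref{prop:finite.inverse}---that proposition decomposes the representation of the finite inverse category itself, and you still need the (trivial) remark, which the paper states explicitly, that pulling back a direct sum of blockcodes along the factoring functor again yields a direct sum of blockcodes on $\smallcat{1}$.
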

\begin{proof}
	(1) $\iff$ (2) [Lemma \ref{lem:quivers}].
	(1) $\iff$ (4) [Theorem \ref{thm:inverse-extendability}]
	(3) implies (1) [Proposition \ref{prop:one-way}].
	
	Assume (2).
	Then there exists a finite inverse category $\smallcat{2}$ and functors $F:\smallcat{1}\ra\smallcat{2}$ and $\nabla':\smallcat{2}\ra\VECTORSPACES_k$ such that $\nabla=\nabla'F$.
	In that case $\nabla'$ is a direct sum, and hence object-wise product, of finitely many blockcodes [Proposition \ref{prop:finite.inverse}].  
	Pullbacks along $F$ of finite object-wise products are finite object-wise products, direct sums.
	Hence (3).
\end{proof}

\begin{eg}
	\label{eg:trisection}
	Consider the following faithful diagram $\nabla$ on the left:
		\begin{equation*}
			\begin{tikzcd}
				& \C\ar{d}[description]{z\mapsto (z,z)} \\
				& \C^2 \\
				\C\ar{ur}[description]{z\mapsto(z,0)} & & \C\ar{ul}[description]{z\mapsto(0,z)}
			\end{tikzcd}
			\quad\quad
			\begin{tikzcd}
				& \C^2\\
				\C\times{\bf 0}\ar[ur] & {\bf 0}\times\C\ar[u] & \Delta_{\C}\ar[ul] \\
				& {\bf 0}\ar[ul]\ar[u]\ar[ur]
			\end{tikzcd}
		\end{equation*}
	The Hasse diagram for $\flag_{\nabla}(\C^2)$ is depicted on the right. 
	Then
  \begin{align*}
		\mu_{\flag_{\nabla}(\C^2)}(\C^2)&=1\\
		\mu_{\flag_{\nabla}(\C^2)}(\Delta_{\C})=\mu_{\flag_{\nabla}(\C^2)}(\C\times{\bf 0})=\mu_{\flag_{\nabla}(\C^2)}({\bf 0}\times\C)&=-1\\
		\mu_{\flag_{\nabla}(\C^2)}({\bf 0})&=1
	\end{align*}
	Therefore since $1\times 2-1\times 1-1\times 1-1\times 1+1\times 0=-1<0$, $\nabla$ does not factor through an inverse category, and in particular there exist no choices of inner products on the vector spaces making $\nabla$ a diagram of inner product spaces and partial isometries much less a decomposition of $\nabla$ into a direct sum of blockcodes.
\end{eg}

\begin{eg}
	\label{eg:bisection}
	Consider the following faithful diagram $\nabla$ on the left:
		\begin{equation*}
			\begin{tikzcd}
				& \C^2\ar[out=480,in=600,loop,swap,"{(z_1,z_2)\mapsto(z_2,0)}"]
			\end{tikzcd}
			\quad\quad
			\begin{tikzcd}
				& \C^2\\
				\C\times{\bf 0}\ar[ur] & & {\bf 0}\times\C\ar[ul] \\
				& {\bf 0}\ar[ul]\ar[ur]
			\end{tikzcd}
		\end{equation*}
	The Hasse diagram for $\flag_{\nabla}(\C^2)$ is depicted on the right.  
	Then
  \begin{align*}
		\mu_{\flag_{\nabla}(\C^2)}(\C^2)&=1\\
		\mu_{\flag_{\nabla}(\C^2)}(\C\times{\bf 0})=\mu_{\flag_{\nabla}(\C^2)}({\bf 0}\times\C)&=-1\\
		\mu_{\flag_{\nabla}(\C^2)}({\bf 0})&=1
	\end{align*}
	Then $1\times 2-1\times 1-1\times 1+1\times 0=0\geqslant 0$.  
	The other requisite sums in the factorizability criterion in Theorem \ref{thm:inverse-extendability} are trivially seen to be non-negative.
	Thus $\nabla$ does factor through an inverse category.
	In fact, this inverse category can be realized as a category of partial isometries, since the above endomorphism is a partial isometry when $\C^2$ is equipped with its standard Hermitian product.
	However, $\nabla$ is indecomposable and does not decompose into a direct sum of blockcodes. 
\end{eg}

\section{Acknowledgements}
The authors greatly appreciate numerous comments and corrections from Ivo Herzog and suggestions from Facundo Memoli.  
The first author was supported by AFOSR grant FA9550-16-1-0212.



\bibliography{semigroups,simplicial,representations,ditopology,geometry,categories}{}
\bibliographystyle{plain}

\end{document}